\numberwithin{equation}{section}
\newtheorem{theorem}{Theorem}[section]
\newtheorem{lemma}[theorem]{Lemma}
\newtheorem{proposition}[theorem]{Proposition}
\newtheorem{remark}[theorem]{Remark}
\def\eps{\varepsilon }
\newcommand{\RR}{\mathbb{R}}
\newcommand{\CC}{\mathbb{C}}
\newcommand{\TT}{{\mathbb T}}
\def\beq{\begin{equation}}
\def\eeq{\end{equation}}
\def\bb1{{1\!\!1}}
\def\R{\mbox{Re }}
\def\w{{\omega}}
\def\eps{\varepsilon}
\def\triangle{\Delta}
\def\bega{\begin{aligned}}
\def\enda{\end{aligned}}
\def\pt{\partial}
\def\w{\omega}
\def\lw{\left}
\def\rw{\right}
\begin{document}

\title{The inviscid limit of Navier-Stokes with critical Navier-slip boundary conditions for analytic data } 

\author{
 Trinh T. Nguyen\footnotemark[1]
}

\maketitle

\renewcommand{\thefootnote}{\fnsymbol{footnote}}

\footnotetext[1]{Department of Mathematics, Penn State University, State College, PA 16803. Emails: 
txn5114@psu.edu.}



\begin{abstract}
In this paper, we establish the short time inviscid limit of the incompressible Navier-Stokes equations with critical Navier-slip boundary conditions for analytic data on half-space, a boundary condition that is physically derived from the hydrodynamic limit of the Boltzmann equations with the Maxwell boundary conditions. The analysis is built upon the recent framework developed by T. T. Nguyen and T. T. Nguyen (Arch. Ration. Mech. Anal., 230(3):1103-1129, 2018.) in the case of the classical no-slip boundary conditions. The novelty in this paper is to derive the precise pointwise bound on the Green kernel for the Stokes problem with a nonlocal boundary condition and to propagate the boundary layer behavior for vorticity. 

\end{abstract}



\section{Introduction}
In this paper, we are interested in the inviscid limit of the Navier-Stokes equations for incompressible fluids
\beq \label{NS}
\begin{aligned} 
\partial_{t}u+u\cdot\nabla u+\nabla p&=\nu\triangle u,
\\
\nabla\cdot u&=0
\end{aligned}
\eeq 
posed on the half space $(x,y)\in \TT \times \RR_+$, with the slip boundary condition
\beq \label{NS-BC}
\begin{aligned}
u_2=0\qquad \text{and}\qquad \pt_y u_1=\nu^{-\beta}u_1\qquad\text{when}\quad y=0.
\end{aligned}
\eeq 
Here $\nu^{\beta}$ is the slip length and $u=u(t,x,y)=(u_1(t,x,y),u_2(t,x,y))\in \mathbb{R}^2$ is the velocity field. 
The goal of this paper is to justify the inviscid limit for analytic data in the critical case $\beta=1$.

First, let us mention some previous works on the inviscid limit and boundary layer theory for $\beta\in [0,1)$. For $\beta=0$, in which the slip length does not depend on $\nu$, the picture is now complete: \cite{Sueur} derives a complete boundary layers expansion, and \cite{MasRou1}  justifies the vanishing viscosity limit by a compactness argument for any bounded domain and for half-space (see also \cite{Bardos,Robert,Pla,Kell}). For $0\le \beta<1$, the inviscid limit is established in \cite{Paddick} with a rate of convergence $O\lw(\nu^{\frac{1-\beta}{2}}\rw)$ for Sobolev data, while the boundary layer expansion is proved to fail when $\beta=\frac{1}{2}$. When $\beta=1$, a Kato-type criterion for the inviscid limit to hold is proved in \cite{kato}.

\subsection{Criticality of $\beta=1$}
When $\beta=1$, we have the \textit{critical-slip} boundary condition 
\beq \label{cri}
\pt_y u_1|_{y=0}=\nu^{-1}u_1|_{y=0}.
\eeq 
The boundary condition \eqref{cri} is physically obtained from the hydrodynamic limit of the Boltzmann equations with the Maxwell boundary conditions (see \cite{Masmoudi-critical}).  
 However, the inviscid limit for the critical case $\beta=1$ and stability of boundary layer expansions for \eqref{cri} remain open, due to the failure of standard energy estimates and even the lack of approriate boundary layer theory for the Navier-Stokes equations with the boundary condition \eqref{cri}. Our work appears to be the first giving an affirmative answer to the inviscid limit problem in the critical case $\beta=1$, with the assumption only placed on the initial data. The inviscid limit holds uniformly in a short time interval independent of the viscosity $\nu$. There are numerical evidences that the inviscid limit fails for longer time, leading to anomalous dissipation of the Navier-Stokes equations (see \cite{Yen}). At the time when the inviscid limit may fail, 
 there would be possible emergence of weak solutions to the Euler equations in the vanishing viscosity limit (see \cite{TheoHuy}).
  
To illustrate why $\beta=1$ is considered to be critical, let us give a proof of the following theorem for any $\beta\in[0,1)$ for the reader convenience. The proof is quite simple and is originally done in \cite{Paddick}.  

 \begin{theorem}\label{thm}(\cite{Paddick})
 Let $u^E\in C^1\lw((0,T),W^{2,\infty}(\Omega)\cap L^2(\Omega)\rw)$ be a smooth solution to Euler with the non-penetration boundary condition $u^E_2|_{y=0}=0$ and $u^\nu$ be the solution to the Navier-Stokes equations with the slip boundary condition \eqref{NS-BC} with $0\le \beta<1$ on the domain $\Omega=\mathbb{T}\times \mathbb{R}_+$. Then 
 \[
 \sup_{0\le t\le T}\|u^\nu(t)-u^E(t)\|_{L^2(\Omega)}\le C_T \nu^{\frac{1-\beta}{2}}+\|u^\nu(0)-u^E(0)\|_{L^2(\Omega)}.
 \]
 The convergence holds for any finite time $T>0$, which is the time of existence of Euler solutions in Sobolev space.
 \end{theorem}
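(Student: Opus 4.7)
The plan is a direct $L^2$ energy estimate on the difference $w:=u^\nu-u^E$. Subtracting the Euler equations from \eqref{NS}, $w$ satisfies
\[
\pt_t w + u^\nu\cdot\na w + w\cdot\na u^E + \na(p^\nu-p^E) = \nu\triangle u^\nu,\qquad \na\cdot w=0,
\]
together with $w_2|_{y=0}=0$. Pairing with $w$ in $L^2(\Omega)$, the transport term $\int(u^\nu\cdot\na w)\cdot w$ vanishes because $\na\cdot u^\nu=0$ and $u^\nu_2|_{y=0}=0$, the pressure term vanishes by $\na\cdot w=0$, and the stretching term is controlled pointwise in time by $\|\na u^E\|_{L^\infty}\|w\|_{L^2}^2$.

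The heart of the argument is the viscous term $\nu\int_\Omega\triangle u^\nu\cdot w$. One integration by parts in $y$ and the decomposition $u^\nu=w+u^E$ yield
\[
\nu\int_\Omega\triangle u^\nu\cdot w\,dx\,dy = -\nu\|\na w\|_{L^2}^2 - \nu\int_\Omega\na u^E:\na w\,dx\,dy - \nu\int_{\TT}(\pt_y u^\nu_1\,w_1)(x,0)\,dx.
\]
Inserting the Navier-slip condition \eqref{NS-BC} in the form $\pt_y u^\nu_1|_{y=0}=\nu^{-\beta}(w_1+u^E_1)|_{y=0}$, the boundary integral rewrites as
\[
-\nu^{1-\beta}\|w_1|_{y=0}\|_{L^2(\TT)}^2 - \nu^{1-\beta}\int_{\TT}(u^E_1\,w_1)(x,0)\,dx.
\]
The first piece is dissipative and is retained; half of it absorbs the second by Young's inequality, leaving an $O\!\left(\nu^{1-\beta}\|u^E_1(\cdot,0)\|_{L^2(\TT)}^2\right)$ error, which is uniformly bounded in time by the $W^{2,\infty}$ hypothesis on $u^E$ and the trace inequality. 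The interior cross term $\nu\int_\Omega\na u^E:\na w$ is likewise absorbed into $\tfrac12\nu\|\na w\|_{L^2}^2$ at the cost of an $O(\nu)\le O(\nu^{1-\beta})$ remainder.

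Collecting these estimates produces the differential inequality
\[
\frac{d}{dt}\|w(t)\|_{L^2}^2 \;\le\; C_T\,\|w(t)\|_{L^2}^2 + C_T\,\nu^{1-\beta},
\]
and Gronwall's lemma closes the proof with the stated rate. The main (and only) obstruction, which simultaneously explains the criticality of $\beta=1$, is the unabsorbed boundary term $\nu^{1-\beta}\int u^E_1 w_1\,|_{y=0}$: the Euler tangential trace $u^E_1(\cdot,0)$ is in general nonzero since Euler enforces only non-penetration, so this error carries its full intrinsic size and only the prefactor $\nu^{1-\beta}$ makes it small. When $\beta=1$ no gain remains, the standard $L^2$ energy method collapses, and one is forced into the analytic-regularity framework, the propagation of boundary-layer behavior for vorticity, and the nonlocal Stokes Green-kernel analysis developed in the body of the paper.
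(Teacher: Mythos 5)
Your proof is correct and follows essentially the same route as the paper's: an $L^2$ energy estimate on the difference, integration by parts on the viscous term, insertion of the Navier-slip condition, and absorption of the cross boundary term $\nu^{1-\beta}\int_{\mathbb T} u^E_1 w_1\,dx$ via Young's inequality into the retained boundary dissipation, leaving an $O(\nu^{1-\beta})$ error controlled by the Euler trace. The only differences are cosmetic (you combine the transport terms into $u^\nu\cdot\nabla w$ and keep $\nu^{1-\beta}\|w_1|_{y=0}\|_{L^2(\TT)}^2$ as the dissipative piece, while the paper works with $u^E\cdot\nabla v+v\cdot\nabla v$ and $\nu^{1-\beta}\|u^\nu_1|_{y=0}\|_{L^2(\TT)}^2$), and your closing observation about why the argument collapses at $\beta=1$ matches the paper's remark.
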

\begin{proof}
Let $v=u^\nu-u^E$ be the difference between the velocity of Navier-Stokes equations and Euler equations. Then $v$ solves 
 \[\bega
 \pt_t v+u^E\cdot \nabla v+v\cdot \nabla u^E+v\cdot \nabla v-\nu\triangle u^\nu&=-\nabla (p^\nu-p^E).\\
\enda 
 \]
 Multiplying both sides of the first equation by $v$, integrating over $\Omega=\mathbb{T}\times \mathbb{R}_+$ and using the non-penetration boundary condition, we have: 
 \beq\label{Sva1}
 \bega 
 &\frac{1}{2}\frac{d}{dt}\|v\|_{L^2}^2+\int_{\Omega}(v\cdot\nabla u^E)\cdot v-\nu\int_\Omega \triangle u^\nu\cdot v=0.\\
 \enda
 \eeq 
 By integrating by parts and the slip boundary conditions, we have 
 \[\bega
 -\nu\int_{\Omega}\triangle u^\nu\cdot v&
 =\nu\int_{\Omega}|\nabla v|^2+\nu\int_{\Omega}\nabla u^E\cdot \nabla v+\nu^{1-\beta}\int_{\mathbb{T}}|u_1^\nu(t,x,0)|^2dx-\nu^{1-\beta}\int_{\mathbb{T}}u_1^\nu(t,x,0)u_1^E(t,x,0)dx
 \enda 
 \]
 Combining the above with \eqref{Sva1}, we have 
 \beq\label{Sva2}
 \bega
& \frac{1}{2}\frac{d}{dt}\|v\|_{L^2}^2+\nu\int_{\Omega}|\nabla v|^2+\nu^{1-\beta}\int_{\mathbb{T}}u_1^\nu(t,x,0)^2dx=\int_{\Omega}(v\cdot \nabla u^E)\cdot v\\
 &-\nu\int_{\Omega}\nabla u^E\cdot \nabla v+\nu^{1-\beta}\int_{\mathbb T}u_1^\nu(t,x,0)u_1^E(t,x,0)dx.
 \enda 
 \eeq
By a standard Cauchy inequality $ab\le \eps a^2+C_\eps b^2$ for $\eps>0$ small, we get 
 \[
 \frac{1}{2}\frac{d}{dt}\|v\|_{L^2}^2\le \|\nabla u^E\|_{L^\infty}\|v\|_{L^2}^2+C_\eps \nu\|\nabla u^E\|_{L^2}^2+C_\eps \nu^{1-\beta}\int_{\mathbb T}|u_1^E(t,x,0)|^2dx.
 \]
 By Gronwall inequality, we have 
 \[
 \|v(t)\|_{L^2}\le \|v(0)\|_{L^2}+C_E\nu^{\frac{1-\beta}{2}}
 \]
 where $C_E$ is a constant that only depends on the Euler solution. The proof is complete.
 \end{proof}
 \begin{remark}One can see that the bound in the above theorem proves the inviscid limit precisely when $\beta<1$, as $\nu^{(1-\beta)/2}\to 0$ in the inviscid limit. Apparently, the proof fails to imply anything at the critical case $\beta=1$.\end{remark} 
 
In this paper, we give a direct proof of the inviscid limit for data with \textit{analytic regularity} in \textit{the critical case} $\beta=1$, with a precise pointwise bound on the vorticity in this class of initial data (see Section \ref{main} for the precise statement). The proof relies on our previous framework in \cite{2N}, which completely avoids boundary layer expansions. More precisely, we work with the vorticity formulation and the boundary conditions that capture \eqref{cri}, derive a pointwise bound for the Green function of the Stokes problems, and propagate boundary layer norms for the vorticity. The difficulty we have to overcome in our analysis of \eqref{cri} is the precise pointwise bound of the temporal Green function for the Stokes problems, which allows us to propagate the boundary layer norm in analytic function spaces.  Interestingly, we shall see below that the nonlinear iteration (with the Green kernel of Stokes) for the full Navier-Stokes equations with the boundary condition \eqref{cri} is \textit{just slightly} better than the no-slip boundary condition's iteration in \cite{2N}, due to a special cancellation of the pole in the resolvent analysis of the Green function (see Section \ref{secGreen}). This might support the intuition that if the fluid is allowed to slip even in the critical sense, it is less violent than the no-slip boundary condition (see \cite{Gre,ToanGrenier2}). 
Lastly, we remark that, just as for critical slip, the inviscid limit is also largely open for the classical no-slip boundary condition $u^\nu|_{\partial\Omega}=0$ (e.g, see \cite{SamCaf2,Mae,2N,1904.04983,Anna}).
~\\
~\\
{\bf Notations:} In this paper, for complex numbers $A,B$, we write $A\lesssim B$ to mean that $|A|\le C_0 |B|$ for some constant $C_0>0$ independent of viscosity $\nu>0$; we also denote $\Re A,\Im A$ to be the real and imaginary part of $A$ respectively. 
~\\~\\
{\bf Organization of the paper:} In section \ref{vor-formulation}, we derive a suitable boundary condition for the vorticity to ensure the critical slip boundary condition \eqref{cri}. In section \ref{sec-defBL}, we introduce analytic boundary layer norms for the vorticity.  In section \ref{main}, we state our main results. In section \ref{sec-analyticBL}, we recall several elliptic and bilinear estimates for the velocity and the nonlinear terms in analytic norm. In section \ref{sec-Stokes123}, we construct and derive a pointwise estimate for the Green function of the Stokes problem. We conclude the paper with Section \ref{sec-proof} with the proofs of the main theorems stated in Section \ref{main}. 
~\\
{\bf Acknowledgement:} The author would like to thank Toan T. Nguyen and Theodore D. Drivas for their many insightful discussions on the subject. The research was supported by the NSF under grant DMS-1764119. Part of this work was done while the author was visiting the Department of Mathematics and the Program in Applied and Computational Mathematics at Princeton University.

\section{Boundary vorticity formulation}\label{vor-formulation}

Let $\omega(x,z) = \partial_z u_1 - \partial_x u_2$ be the corresponding vorticity in $(x,z)\in \TT\times \RR_+$. Then, the vorticity equation reads
\beq\label{NS-vor}
\partial_{t}\omega-\nu\Delta\omega=-u\cdot\nabla\omega
\eeq
with $u = \nabla^\perp \Delta^{-1} \omega$. Here and throughout the paper, $\Delta^{-1}$ denotes the inverse of the Laplacian operator with the Dirichlet boundary condition: precisely, $\phi = \Delta^{-1}\omega$ solves $\Delta \phi = \omega$ on the half-space $\TT \times \RR_+$, with $\phi_{\vert_{z=0}} =0$.  

To ensure the critical slip boundary condition, we impose $\nu \w=u_1$ on the boundary. 
Taking Fourier transform in $x$, namely $\w(x,z)=\sum_{\alpha\in \mathbb{Z}}\w_\alpha(z)e^{i\alpha x}$, we impose the following boundary condition 
\beq\label{bdrcon}
\nu\w_\alpha(0)=-\int_0^\infty e^{-\alpha y}\w_\alpha(y)dy
\eeq 
which follows from the following lemma:

\begin{lemma}
Let $u_{1,\alpha}$ be the Fourier transform of the tangential component $u_1$ and $\w_\alpha$ the Fourier transform of $\w$. Then the value of $u_{1,\alpha}$ on the boundary $z=0$ is given by:
 \[
 u_{1,\alpha}(0)=-\int_0^\infty e^{-\alpha y}\w_\alpha(y)dy.
 \]
\end{lemma}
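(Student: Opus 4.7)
The plan is to reduce the formula to an explicit computation with the Green's function of the operator $\partial_z^2-\alpha^2$ on the half-line with Dirichlet boundary condition at $z=0$. The natural starting point is the stream function. Since $u=\nabla^\perp \Delta^{-1}\omega$, setting $\phi=\Delta^{-1}\omega$ we have $u_1=\partial_z\phi$, $u_2=-\partial_x\phi$, with $\phi|_{z=0}=0$. Taking the Fourier transform in $x$, the Fourier coefficient $\phi_\alpha(z)$ solves the ODE
\beq\label{phiODE}
(\partial_z^2-\alpha^2)\phi_\alpha=\omega_\alpha \quad\text{on }\mathbb{R}_+,\qquad \phi_\alpha(0)=0, \qquad \phi_\alpha(z)\to 0 \text{ as }z\to\infty,
\eeq
and the quantity of interest is $u_{1,\alpha}(0)=\partial_z\phi_\alpha(0)$.

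Next I would write down explicitly the Green's function $G_\alpha(z,y)$ of \eqref{phiODE} for $\alpha\ne 0$. Using the two linearly independent solutions $\sinh(\alpha z)$ and $e^{-\alpha z}$ of the homogeneous equation (taking $\alpha>0$ without loss of generality, by the symmetry $\omega_{-\alpha}=\overline{\omega_\alpha}$ and the evenness of the kernel in $\alpha$), one obtains the standard formula
\[
G_\alpha(z,y)=-\frac{1}{2\alpha}\bigl(e^{-\alpha|z-y|}-e^{-\alpha(z+y)}\bigr),
\]
so that $\phi_\alpha(z)=\int_0^\infty G_\alpha(z,y)\omega_\alpha(y)\,dy$.

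It then remains to differentiate in $z$ under the integral sign and evaluate at the boundary. For $z$ in a neighborhood of $0$ and $y>0$, we have $|z-y|=y-z$, hence $\partial_z e^{-\alpha|z-y|}|_{z=0}=\alpha e^{-\alpha y}$ and $\partial_z e^{-\alpha(z+y)}|_{z=0}=-\alpha e^{-\alpha y}$. Substituting gives $\partial_z G_\alpha(0,y)=-e^{-\alpha y}$, and therefore
\[
u_{1,\alpha}(0)=\partial_z\phi_\alpha(0)=-\int_0^\infty e^{-\alpha y}\omega_\alpha(y)\,dy,
\]
which is the claim. The case $\alpha=0$ should be handled separately as a brief sanity check: integrating $\phi_0''=\omega_0$ once under the decay condition $\phi_0'(\infty)=0$ gives $\phi_0'(0)=-\int_0^\infty \omega_0(y)\,dy$, consistent with the limit $\alpha\to 0^+$ of the formula above.

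There is no real obstacle here; the argument is entirely elementary. The only point that demands a little care is justifying differentiation under the integral and the decay hypothesis at $z=\infty$ needed to select the correct branch of the homogeneous solution — these follow from assuming $\omega_\alpha$ is sufficiently integrable (which will be automatic in the analytic function class introduced later in the paper).
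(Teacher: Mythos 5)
Your proof is correct and follows essentially the same route as the paper: write the stream function via the explicit Green's function for $\partial_z^2 - \alpha^2$ on the half-line with Dirichlet data, differentiate under the integral, and evaluate at $z=0$. The only cosmetic difference is that the paper writes out $\phi_\alpha(z)$ as a single integral and takes $\partial_z$, whereas you name the kernel $G_\alpha(z,y)$ first; your added remarks on the $\alpha=0$ case and the sign of $\alpha$ are sensible bits of care that the paper leaves implicit.
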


\begin{proof}
Since $\pt_xu_1+\pt_zu_2=0$, one can write $u_1=\pt_z\phi$ and $u_2=-\pt_x\phi$ for some stream function $\phi$.
Since $\triangle \phi=\w$ on $\mathbb{T}\times \mathbb{R}_+$ and $\phi|_{z=0}=0$, we have 
\[
(\pt_z^2-\alpha^2)\phi_\alpha=\w_\alpha,\qquad \phi_\alpha(0)=0
\]
where $\phi_\alpha$ and $\w_\alpha$ are the Fourier transform of $\phi$ and $\w$.
The solution of the above equation is given explicitly by 
\[\bega 
\phi_\alpha(z)&=\frac{1}{2\alpha}\int_0^\infty \lw(e^{-\alpha|y+z|}-e^{-\alpha|y-z|}\rw)\w_\alpha(y)dy\\
&=\frac{1}{2\alpha}\lw(
\int_0^\infty e^{-\alpha(y+z)}\w_\alpha(y)dy-\int_0^z e^{\alpha(y-z)}\w_\alpha(y)dy-\int_z^\infty e^{\alpha(z-y)}\w_\alpha(y)dy
\rw)
\enda 
\]
Since $u_{1,\alpha}=\pt_z\phi_\alpha$, a direct calculation yields 
\[
u_{1,\alpha}(z)=\frac{1}{2\alpha}\lw(-\alpha \int_0^\infty e^{-\alpha(y+z)}\w_\alpha(y)dy+\alpha \int_0^z e^{\alpha(y-z)}\w_\alpha(y)dy-\alpha \int_z^\infty e^{\alpha(z-y)}\w_\alpha(y)dy\rw)
\]
The lemma follows, after evaluating $u_{1,\alpha}$ at $z=0$.
\end{proof}

\section{Analytic boundary layer function spaces}\label{sec-defBL}

In this section, we recall analytic boundary layer spaces introduced in our previous work \cite{2N} (see also \cite{ToanGrenier1,ToanGrenier2}). Precisely, we consider holomorphic functions on the pencil-like complex domain:
\begin{equation}\label{def-pencil}\Omega_{\sigma} =\Big\{z\in\mathbb{C}:\quad|\Im z|<\min\{\sigma \Re z ,\sigma \}\Big\} ,\end{equation}
for $\sigma>0$. Let $\delta = \sqrt \nu$ be the classical boundary layer thickness. We introduce the analytic boundary layer function spaces ${\cal B}^{\sigma,\delta}$ that consists of holomorphic functions on $\Omega_\sigma$ with a finite norm 
\begin{equation}\label{def-bl0}
\| f \|_{\sigma,\delta}  = \sup_{z\in \Omega_\sigma} | f(z) | e^{\beta_0 \Re z} 
\Bigl( 1 + \delta^{-1} \phi_{P} (\delta^{-1} z)  \Bigr)^{-1}
\end{equation}
for some small $\beta_0>0$, and for boundary layer weight function $$ \phi_P(z) = \frac{1}{1+|\Re z|^P} $$
for some fixed constant $P>1$. Here, we suppress the dependence on $\beta_0,P$ as they are fixed throughout the paper. We expect that the vorticity function $\omega (t,x,z)$, for each fixed $t,x$, will be in ${\cal B}^{\sigma,\delta}$, precisely describing the behavior near the boundary and near infinity. In fact, there is an additional initial layer of thickness $\delta_t = \sqrt{\nu t}$ that appears near the boundary. To capture this, we introduce the time-dependent boundary layer norm: 
\begin{equation}\label{def-blt}
\| f\|_{\sigma,\delta(t)}  = \sup_{z\in \Omega_\sigma} | \omega(z) | e^{\beta_0 \Re z} 
\Bigl( 1 + \delta_t^{-1} \phi_{P} (\delta_t^{-1} z)  +  \delta^{-1} \phi_{P} (\delta^{-1} z)  \Bigr)^{-1} ,
\end{equation}
with $\delta_t = \sqrt{\nu t}$, $\delta = \sqrt \nu$, and with the same boundary layer weight function $\phi_P(\cdot)$. By convention, the norm $\| \cdot \|_{\sigma,\delta(0)}$ at time $t=0$ is replaced by $\| \cdot\|_{\sigma,\delta}$, the boundary layer norm with precisely one boundary layer behavior of thickness $\delta$, and $\| \cdot \|_{\sigma,0}$ denotes the norm without the boundary layer behavior.

For functions depending on two variables $f(x,z)$, we introduce the partial Fourier transform
in variable $x$ 
$$f(x,z)=\sum_{\alpha\in\mathbb{Z}}f_{\alpha}(z)e^{i\alpha x}$$
and introduce the following analytic norm 
$$||f|| _{\rho,\sigma,\delta(t)}=\sum_{\alpha\in\mathbb{Z}} e^{\rho|\alpha|} ||f_{\alpha}||_{\sigma,\delta(t)}$$
for $\rho,\sigma>0$. We denote by $B^{\rho,\sigma,\delta(t)}$ the corresponding spaces. 
In Section \ref{sec-analyticBL}, we shall recall some basic properties of such analytic function spaces.

\section{Main results}\label{main}

Our main results are as follows. 

\begin{theorem}\label{theo-main} Let $M_0>0$ and let $\omega_0$ be in ${\cal B}^{\rho_0,\sigma_0,\delta}$ for $\rho,\sigma>0$ and for $\delta = \sqrt \nu$, with $\|\omega_0 \|_{\rho_0,\sigma_0,\delta}\le M_0$. Then, there is a positive time $T$, independent of $\nu>0$,  so that the solution $\omega(t)$ to the Navier-Stokes equations \eqref{NS-vor}-\eqref{NS-BC}, with the initial data $\omega(0) = \omega_0$, exists in $C^1([0,T]; {\cal B}^{\rho,\sigma, \delta(t)})$ for $0<\rho <\rho_0$ and $0<\sigma<\sigma_0$. In particular, there is a $C_0$ so that the vorticity $\omega(t)$ satisfies 
\begin{equation}\label{bdry-propagate}|\omega(t,x,z)| \le C_0 e^{-\beta_0 z} \Bigl( 1 + \delta_t^{-1} \phi_{P} (\delta_t^{-1} z)  +  \delta^{-1} \phi_{P} (\delta^{-1} z)  \Bigr)\end{equation}
for $(t,x,z)\in [0,T]\times \TT \times \RR_+$, with $\delta_t = \sqrt{\nu t}$ and $\delta = \sqrt \nu$. 
\end{theorem}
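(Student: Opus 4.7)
The plan is to set up a Duhamel representation for the vorticity in terms of the Stokes Green function that incorporates the nonlocal boundary condition \eqref{bdrcon}, and then close a fixed-point iteration in the analytic boundary-layer norms $\mathcal{B}^{\rho,\sigma,\delta(t)}$. Following the strategy of \cite{2N}, I would first freeze the transport term and view the vorticity as solving the linear Stokes problem
\[
\partial_t \omega - \nu \Delta \omega = -u\cdot \nabla \omega, \qquad \nu \omega_\alpha(0)=-\int_0^\infty e^{-\alpha y}\omega_\alpha(y)\,dy,
\]
and then write $\omega(t) = \mathcal{S}(t)\omega_0 - \int_0^t \mathcal{S}(t-s)(u\cdot\nabla\omega)(s)\,ds$, where $\mathcal{S}(t)$ is the semigroup generated by the Stokes operator with the nonlocal boundary condition. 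The fundamental object is the temporal Green kernel $G(t,x,z,y)$ whose construction and pointwise analysis is promised in Section~\ref{sec-Stokes123}.

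The iteration is driven by two ingredients. First, the linear semigroup bound
\[
\| \mathcal{S}(t)f \|_{\rho,\sigma,\delta(t)} \lesssim \| f \|_{\rho,\sigma,\delta(0)},
\]
which captures the propagation of the time-dependent boundary layer of thickness $\delta_t=\sqrt{\nu t}$ out of a single-thickness boundary layer at $t=0$. Second, I would combine this with the bilinear and elliptic estimates of Section~\ref{sec-analyticBL} to control the nonlinearity: using $u=\nabla^\perp\Delta^{-1}\omega$, the quantity $u\cdot\nabla\omega$ loses one tangential or normal derivative relative to $\omega$. The derivative loss is absorbed by a standard shrinking of analyticity radii, i.e.\ one controls $\|\partial_x \omega\|_{\rho',\sigma',\delta(t)}$ by $(\rho-\rho')^{-1}\|\omega\|_{\rho,\sigma,\delta(t)}$, and similarly for $\partial_z$ via the pencil geometry of $\Omega_\sigma$. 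Putting these together in the Duhamel formula leads, after taking the boundary-layer norm, to an abstract Cauchy-Kowalewskaya type inequality of the form
\[
\| \omega(t)\|_{\rho(t),\sigma(t),\delta(t)} \le \| \omega_0 \|_{\rho_0,\sigma_0,\delta} + C\int_0^t \frac{K(t,s)}{\rho(s)-\rho(t)} \| \omega(s)\|_{\rho(s),\sigma(s),\delta(s)}^2 \, ds
\]
for appropriately decreasing radii $\rho(t),\sigma(t)$, which is closed by a contraction argument on a short time interval $[0,T]$ with $T$ independent of $\nu$.

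The main obstacle, and the new content relative to \cite{2N}, is the precise pointwise description of $G(t,x,z,y)$ that is compatible with the boundary-layer weight $1+\delta_t^{-1}\phi_P(\delta_t^{-1}z)+\delta^{-1}\phi_P(\delta^{-1}z)$. The nonlocal boundary condition \eqref{bdrcon} means the resolvent problem is no longer a pure ODE in $z$, and the Evans-type function determining the spectral density of the resolvent is modified by a term $\int_0^\infty e^{-\alpha y}\,\cdot\,dy$. I would construct $G$ as an inverse Laplace transform of the resolvent kernel, splitting it into a heat-like interior piece and a boundary-correction piece whose kernel is produced by reflecting across $z=0$ against the modified Evans function. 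The key technical point, announced in the introduction, is a cancellation of a would-be pole in the resolvent coming from the $\nu\omega(0)=u_1(0)$ coupling; this cancellation is what makes the time-integrated kernel bound slightly better than in the no-slip setting and ultimately makes the nonlinear iteration close. The anticipated pointwise form
\[
|G(t,x,z,y)| \lesssim e^{-\beta_0(z+y)}\bigl(\mathrm{heat}_{\nu t}(z-y)+ \text{boundary correction}\bigr)
\]
then directly yields the weighted bound \eqref{bdry-propagate} upon applying it to $\omega_0$ and bootstrapping through the Duhamel iteration.

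Once the fixed point is obtained in $\mathcal{B}^{\rho,\sigma,\delta(t)}$ on $[0,T]$, the pointwise bound \eqref{bdry-propagate} follows immediately from the definition \eqref{def-blt} of the norm, and the $C^1$ regularity in time comes from reading off $\partial_t \omega = \nu \Delta \omega - u\cdot\nabla \omega$ in the same analytic class, both terms being bounded by the closed iteration estimates.
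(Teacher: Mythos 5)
Your overall roadmap matches the paper's: vorticity formulation, Duhamel representation against a Stokes Green function adapted to the nonlocal boundary condition \eqref{bdrcon}, resolvent construction with a pole cancellation at $\lambda=0$, and a Cauchy--Kowalewski type closing with shrinking analyticity radii. But two of your claimed ingredients are incorrect as stated. Your anticipated pointwise form $|G(t,x,z,y)| \lesssim e^{-\beta_0(z+y)}\bigl(\text{heat}_{\nu t}(z-y)+\text{boundary correction}\bigr)$ is false: the heat-kernel piece $(4\pi\nu t)^{-1/2}e^{-|y-z|^2/4\nu t}$ is of size $(\nu t)^{-1/2}$ whenever $z\approx y$, however large $y$ is, so there is no $e^{-\beta_0(z+y)}$ prefactor. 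The correct statement (Proposition \ref{prop-Stokes-Green}) is $G_\alpha=H_\alpha+R_\alpha$ with $H_\alpha$ the Neumann heat kernel and $|R_\alpha|\lesssim (\nu t)^{-1/2}e^{-\theta_0\alpha^2\nu t}e^{-\theta_0 z^2/4\nu t}$; the exponential factor $e^{-\beta_0 z}$ in \eqref{bdry-propagate} is inherited from the weight $e^{\beta_0\Re z}$ built into the norm \eqref{def-blt} and propagated through the convolution estimates, not from exponential decay of the kernel itself.

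More substantively, your abstract ACK inequality leaves the transfer kernel $K(t,s)$ unspecified, and that hides exactly the step that makes the iteration close. Since the boundary-layer thickness in the norm is $\delta_s=\sqrt{\nu s}$ at time $s$ and $\delta_t=\sqrt{\nu t}$ at time $t$, the semigroup estimate one actually needs (Proposition \ref{prop-Stokes}) has two terms, roughly $\|e^{\nu(t-s)B}f\|_{\delta(t)} \lesssim \sqrt{t/(t-s)}\,\|f\|_{\mathcal{W}^{k,1}_{\rho,\sigma}} + \sqrt{t/s}\,\|f\|_{\delta(s)}$: the mismatch between the layer of thickness $\delta_{t-s}$ produced by the kernel and the target layer of thickness $\delta_t$ is absorbed into an $\mathcal{L}^1$-type norm, while the persistent layer from time $s$ is transferred with a $\sqrt{t/s}$ loss. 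Because of that first term, the paper runs the nonlinear iteration in two stages: first in $\mathcal{W}^{k,1}_{\rho,\rho}$ via the norm $A(\gamma)$, and only then in the boundary-layer norm via $B(\gamma)$, with the first stage feeding $\sup_s\|N(s)\|_{\mathcal{W}^{k,1}}$ into the second. A single fixed-point argument carried out in $\mathcal{B}^{\rho,\sigma,\delta(t)}$ alone, as you propose, would not close without that intermediate $\mathcal{L}^1$ step.
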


\begin{theorem}\label{theo-limit}  Let $M_0>0$ and let $u_0^\nu$ be divergence-free analytic initial data so that  $\omega_0^\nu = \nabla \times u_0^\nu$ is in ${\cal B}^{\rho_0,\sigma_0,\delta}$ for $\rho,\sigma>0$ and for $\delta = \sqrt \nu$, with $\|\omega_0^\nu\|_{\rho_0,\sigma_0,\delta}\le M_0$. Then, the inviscid limit holds for Navier-Stokes solutions with the initial data $u^\nu_0$, with the time scale set by Theorem \ref{theo-main}. Precisely, there are unique local solutions $u^\nu(t)$ to the Navier-Stokes equations \eqref{NS}-\eqref{NS-BC}, for small $\nu>0$, and a unique solution $u^E(t)$ to the corresponding Euler equations, with initial data $u^E_0 = \lim_{\nu\to 0}u_0^\nu$, so that 
\[
\| u^\nu(t) - u^E(t)\|_{L^2}\le \|u^\nu_0-u_0^E\|_{L^2}+C_T\sqrt\nu+C_T(\nu t)^{\frac{1}{4}}\qquad\text{for}\quad t\in [0,T],
\]
where $C_T$ is a constant that only depends on the solution of Euler and $T$. In particular, we have 
\[
\sup_{0\le t\le T}\|u^\nu(t)-u^E(t)\|_{L^p}\to 0\qquad \text{as}\quad \nu\to 0
\]
for any $2\le p<\infty$. 
\end{theorem}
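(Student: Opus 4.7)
The plan is to build on Theorem~\ref{theo-main}---which already furnishes the Navier--Stokes solution $u^\nu$ on a $\nu$-independent interval $[0,T]$ together with the pointwise vorticity bound~\eqref{bdry-propagate}---and to compare $u^\nu$ to the analytic Euler solution $u^E$ by a direct $L^2$ energy estimate on $v := u^\nu - u^E$. The Euler solution with non-penetration $u_2^E|_{z=0}=0$ and data $u_0^E = \lim_{\nu\to 0} u_0^\nu$ is constructed either by classical analytic Euler well-posedness or by passing to the $\nu \to 0$ limit in Theorem~\ref{theo-main} on the bulk; it exists on the same interval $[0,T]$ with uniform $C^1([0,T];W^{2,\infty})$ bounds depending only on $M_0$.

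Specializing the computation in the proof of Theorem~\ref{thm} to $\beta=1$ yields the identity
\begin{equation*}
\tfrac{1}{2}\tfrac{d}{dt}\|v\|_{L^2}^2 + \nu\|\nabla v\|_{L^2}^2 + \int_\TT |u_1^\nu(t,x,0)|^2\, dx = \int_\Omega (v\cdot\nabla u^E)\cdot v - \nu\int_\Omega \nabla u^E\cdot\nabla v + \int_\TT u_1^\nu u_1^E\, dx,
\end{equation*}
in which the slip prefactor $\nu^{1-\beta}$ has collapsed to $1$. A naive symmetric Cauchy--Schwarz on $\int_\TT u_1^\nu u_1^E\, dx$ leaves an $O(1)$ residual $\tfrac12\|u_1^E|_{z=0}\|_{L^2(\TT)}^2$, which time-integrates to $O(T)$ and precludes the inviscid limit---precisely the obstruction noted in the Remark after Theorem~\ref{thm}. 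The decisive new input is the critical-slip identity $u_1^\nu(t,x,0) = \nu\,\omega(t,x,0)$, which together with~\eqref{bdry-propagate} evaluated at $z=0$ gives $|\omega(t,x,0)|\lesssim 1 + \delta_t^{-1} + \delta^{-1}$, hence
\begin{equation*}
|u_1^\nu(t,x,0)|\lesssim \sqrt\nu + \sqrt{\nu/t}, \qquad \left|\int_\TT u_1^\nu u_1^E\, dx\right|\lesssim C_E\bigl(\sqrt\nu + \sqrt{\nu/t}\bigr).
\end{equation*}

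The remaining terms are routine: the convection integral is bounded by $\|\nabla u^E\|_{L^\infty}\|v\|_{L^2}^2$, and the viscous mismatch $\nu\int_\Omega \nabla u^E\cdot\nabla v$ is split by Young's inequality into $\tfrac{\nu}{2}\|\nabla v\|_{L^2}^2$ (absorbed on the left) plus $\tfrac{\nu}{2}\|\nabla u^E\|_{L^2}^2 = O(\nu)$. Collecting,
\begin{equation*}
\tfrac{d}{dt}\|v\|_{L^2}^2 \le C_E \|v\|_{L^2}^2 + C_E\bigl(\sqrt\nu + \sqrt{\nu/t}\bigr),
\end{equation*}
and Gronwall combined with $\int_0^t \sqrt{\nu/s}\,ds = 2\sqrt{\nu t}$ produces $\|v(t)\|_{L^2}^2 \le e^{C_E T}\bigl(\|v(0)\|_{L^2}^2 + C\sqrt{\nu t} + C\nu T\bigr)$; taking square roots delivers the stated $L^2$ bound. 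For $L^p$ convergence with $2\le p<\infty$, interpolate $\|v\|_{L^p}\le \|v\|_{L^2}^{2/p}\|v\|_{L^\infty}^{1-2/p}$, with $\|u^\nu\|_{L^\infty}$ uniformly bounded via the Biot--Savart integral applied to~\eqref{bdry-propagate} and $\|u^E\|_{L^\infty}$ bounded by analyticity.

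The main obstacle---and the only ingredient that genuinely distinguishes the critical case from the Paddick regime---is the boundary-trace estimate in the middle paragraph: without the sharp $\delta_t^{-1}$ control on $\omega|_{z=0}$ propagated by Theorem~\ref{theo-main} (which itself rests on the nonlocal Green-function analysis of Section~\ref{sec-Stokes123}), the $O(1)$ boundary contribution cannot be improved and the simple energy argument fails exactly as the Paddick bound does at $\beta=1$. Every other step in the plan is classical once that one sharp trace bound is in hand.
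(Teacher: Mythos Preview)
Your proposal is correct and follows essentially the same route as the paper: specialize the energy identity from Theorem~\ref{thm} to $\beta=1$, use the critical-slip relation $u_1^\nu|_{z=0}=\nu\,\omega^\nu|_{z=0}$ together with the trace bound $|\omega^\nu(t,x,0)|\lesssim \delta_t^{-1}$ coming from~\eqref{bdry-propagate}, and close by Gronwall. The paper's presentation differs only cosmetically---it writes the boundary term directly as $\nu\int_\TT \omega^\nu u_1^E\,dx$ and drops the subdominant $\delta^{-1}$ contribution (which is fine since $\delta_t^{-1}\ge\delta^{-1}$ on $[0,T]$)---and it does not spell out the $L^p$ interpolation step that you supply.
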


As mentioned, the proof of the main theorems is direct, using the vorticity formulation \eqref{NS-vor}-\eqref{bdrcon}. For Theorem \ref{theo-main}, we first prove the local existence of solutions in the analytic space $L^1_{\rho,\sigma}$ (see \eqref{def-Anorm}), and then in boundary layer spaces in order to establish the precise pointwise behavior of the vorticity (see Section \ref{bdrPropa}). Theorem \ref{theo-main} applies in particular for well-prepared analytic data that satisfy the Prandtl's ansatz of size $\sqrt \nu$. For general analytic data, beside the Prandtl's layers, the initial layers whose thickness is of order $\sqrt{\nu t}$ appear as captured in \eqref{bdry-propagate}. 
After proving Theorem \ref{theo-main}, we establish Theorem \ref{theo-limit} by a direct energy estimate; see Section \ref{proof}. 

\section{Analytic function spaces}\label{sec-analyticBL}
In this section, we recall basic properties of the analytic norms as well as the elliptic estimates that yield bounds on velocity in term of vorticity. These norms and estimates can be found in \cite{2N,SamCaf2}. 
Let $f(x,z)$ be holomorphic functions on $\TT \times \Omega_{\sigma}$, with $\Omega_\sigma$ being the pencil-like complex domain defined as in \eqref{def-pencil}. 
For $\rho,\sigma>0$ and $1\le p\le \infty$, we introduce the analytic function spaces denoted by $\mathcal L^p_{\rho,\sigma}$ with the finite norm 
\begin{equation}\label{def-Anorm} \| f\| _{\mathcal L^p_{\rho,\sigma}} :=\sum_{\alpha\in\mathbb{Z}} e^{\rho|\alpha|} \|f_{\alpha}\|_{L^p_\sigma}, \qquad \| f_\alpha \|_{L^p_\sigma} := \sup_{0\le \theta < \sigma}\Big( \int_{\partial\Omega_\theta} |f_\alpha(z)|^p\; |dz|\Big)^{1/p},\end{equation}
in which $f_\alpha = f_\alpha(z)$ denotes the Fourier transform of $f(x,z)$. In the case when $p=\infty$, the 
$L^p$ norm is replaced by the sup norm over $\Omega_\sigma$. Recalling the analytic boundary layer space $B^{\rho,\sigma,\delta(t)}$ introduced in Section \ref{sec-defBL}, we have

\begin{lemma}[$L^1$ embedding] \label{lem-emL1}There holds the embedding ${\cal B}^{\rho,\sigma,\delta(t)} \subset \mathcal L^1_{\rho,\sigma}$. 
\end{lemma}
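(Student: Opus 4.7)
The goal is the norm inequality $\|f\|_{\mathcal L^1_{\rho,\sigma}} \lesssim \|f\|_{\rho,\sigma,\delta(t)}$ with a constant uniform in the boundary layer thicknesses $\delta_t = \sqrt{\nu t}$ and $\delta = \sqrt{\nu}$. Since both norms are $e^{\rho|\alpha|}$-weighted $\ell^1$ sums of mode-by-mode quantities, the plan is to reduce to a single-mode inequality
\[
\|f_\alpha\|_{L^1_\sigma} \;\lesssim\; \|f_\alpha\|_{\sigma,\delta(t)},
\]
with implicit constant independent of $\alpha$, $\delta_t$, and $\delta$, and then sum.

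First I would fix $0 \le \theta < \sigma$ and use the very definition of $\|\cdot\|_{\sigma,\delta(t)}$ to get the pointwise bound
\[
|f_\alpha(z)| \;\le\; \|f_\alpha\|_{\sigma,\delta(t)}\, e^{-\beta_0\Re z}\Bigl( 1 + \delta_t^{-1} \phi_{P} (\delta_t^{-1} z)  +  \delta^{-1} \phi_{P} (\delta^{-1} z)  \Bigr)
\]
for every $z \in \partial\Omega_\theta$. Integrating over $\partial\Omega_\theta$ and splitting into three pieces, the task becomes showing that each of
\[
I_0(\theta) := \int_{\partial\Omega_\theta} e^{-\beta_0 \Re z}\,|dz|, \qquad I_{\delta_\star}(\theta) := \int_{\partial\Omega_\theta} e^{-\beta_0 \Re z}\, \delta_\star^{-1}\phi_P(\delta_\star^{-1} z)\,|dz|
\]
is bounded by a universal constant, for both $\delta_\star = \delta_t$ and $\delta_\star = \delta$.

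The piece $I_0$ is immediate once I parametrize $\partial\Omega_\theta$ by $\Re z = x \ge 0$, $\Im z = \pm\theta\min(x,1)$, whose arc-length element is bounded in $x$; the exponential $e^{-\beta_0 x}$ then gives integrability. For $I_{\delta_\star}$, I would make the rescaling $y = \delta_\star^{-1} x$, turning the integral (up to the universally bounded arc-length factor) into
\[
\int_0^\infty e^{-\beta_0 \delta_\star y}\,\frac{dy}{1+y^P} \;\le\; \int_0^\infty \frac{dy}{1+y^P},
\]
which is finite precisely because $P>1$ --- this is the only structural input beyond the pointwise bound, and it is the reason boundary layer weights of this shape sit inside $L^1$ uniformly in thickness. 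Taking the sup in $\theta$ and summing the resulting inequality against $e^{\rho|\alpha|}$ finishes the embedding.

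The only mild subtlety --- and the step I would be most careful with --- is the uniformity of the arc-length estimate on $\partial\Omega_\theta$ for $\theta$ up to (but not including) $\sigma$, together with the rescaling that removes all dependence on $\delta_\star$. Everything else is a direct bookkeeping of the three terms in the weight and the Fourier series.
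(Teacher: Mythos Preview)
Your argument is correct and is precisely the natural proof: reduce mode-by-mode, bound $|f_\alpha(z)|$ pointwise by the boundary layer weight, and then verify that each of the three weight pieces integrates to a constant along $\partial\Omega_\theta$ uniformly in $\theta<\sigma$ and in the thicknesses $\delta_t,\delta$, the key point being the rescaling $y=\delta_\star^{-1}x$ together with $P>1$. The paper itself does not give a proof here but defers to \cite{2N}, Section~2; your write-up is essentially that standard argument, so there is nothing to compare.
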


\begin{lemma}[Recovering loss of derivatives] \label{loss-1}
For any $0<\sigma'<\sigma$, $0<\rho'<\rho$, and $\psi(z)=\frac{z}{1+z}$, there hold
\begin{equation}\label{alg1}\| fg\|  _{\mathcal L^1_{\rho,\sigma}}  \le\| f\| _{\mathcal L^\infty_{\rho,\sigma}}\| g\|_{\mathcal L^1_{\rho,\sigma}}, \end{equation}
\begin{equation}\label{alg2}\|\partial_{x}f\|_{\mathcal L^1_{\rho',\sigma}}\le \frac{C}{\rho- \rho'}\| f\|_{\mathcal L^1_{\rho,\sigma}}, \qquad \| \psi(z)\partial_{z}f\| _{\mathcal L^1_{\rho,\sigma'}}\le \frac{C}{\sigma-\sigma'}\| f\|_{\mathcal L^1_{\rho,\sigma}}. \end{equation}
The same estimates hold for boundary layer norms $\| \cdot \|_{\rho,\sigma,\delta}$ replacing $\|\cdot \|_{\mathcal L^1_{\rho,\sigma}}$ in the above three inequalities. 
\end{lemma}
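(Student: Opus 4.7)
I would handle the three inequalities in order, then extend to the boundary-layer norms by tracking how the weights interact with each estimate. For the algebra inequality \eqref{alg1}, the Fourier transform of the product is the convolution $(fg)_\alpha(z)=\sum_{\beta}f_{\alpha-\beta}(z)g_\beta(z)$. Applying H\"older pointwise on each boundary curve $\partial\Omega_\theta$ with $\theta<\sigma$ and taking $\sup$ in $\theta$ gives $\|(fg)_\alpha\|_{L^1_\sigma}\le\sum_\beta \|f_{\alpha-\beta}\|_{L^\infty_\sigma}\|g_\beta\|_{L^1_\sigma}$; multiplying by $e^{\rho|\alpha|}$, using $|\alpha|\le|\alpha-\beta|+|\beta|$, and summing in $\alpha$ then factors the double sum into the product on the right.

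The tangential derivative estimate in \eqref{alg2} reduces via $(\partial_x f)_\alpha=i\alpha f_\alpha$ to the elementary pointwise bound $|\alpha|e^{-(\rho-\rho')|\alpha|}\le C/(\rho-\rho')$, which is $\sup_{r\ge 0}re^{-r}<\infty$ rescaled. The substantive step is the normal derivative estimate, where I would exploit the geometry of the pencil domain: the distance from any $z\in\partial\Omega_{\sigma'}$ to $\partial\Omega_\sigma$ is comparable to $(\sigma-\sigma')\min(\Re z,1)\simeq (\sigma-\sigma')\psi(\Re z)$. Cauchy's integral formula on a circle $|w-z|=r(z):=c(\sigma-\sigma')\psi(\Re z)$ lying in $\Omega_\sigma$ gives
\[\psi(z)|\partial_z f_\alpha(z)|\lesssim \frac{\psi(\Re z)}{r(z)}\cdot\frac{1}{2\pi r(z)}\int_{|w-z|=r(z)}|f_\alpha(w)|\,|dw|\lesssim \frac{1}{\sigma-\sigma'}\,\mathrm{Avg}_{r(z)}|f_\alpha|(z).\]
Integrating this pointwise bound over $z\in\partial\Omega_{\sigma'}$ and applying Fubini---noting that each $w\in\Omega_\sigma$ lies on at most finitely many of these circles with comparable radii---yields $\|\psi\partial_z f_\alpha\|_{L^1_{\sigma'}}\lesssim (\sigma-\sigma')^{-1}\|f_\alpha\|_{L^1_\sigma}$. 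Summing in $\alpha$ with the common weight $e^{\rho|\alpha|}$ closes the estimate.

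Extending to the boundary-layer norm $\|\cdot\|_{\rho,\sigma,\delta}$ is the step I expect to need the most care. For the algebra and $\partial_x$ inequalities the weight $e^{\beta_0\Re z}(1+\delta^{-1}\phi_P(\delta^{-1}z))^{-1}$ is a common pointwise multiplier on both sides, so the previous arguments transfer verbatim. For the $\psi\partial_z$ inequality, the key observation is that on the disk $|w-z|\le r(z)\le (\sigma-\sigma')\Re z$ the real part satisfies $\Re w\in[(1-(\sigma-\sigma'))\Re z,(1+(\sigma-\sigma'))\Re z]$, so both $e^{\beta_0\Re z}$ and $(1+\delta^{-1}\phi_P(\delta^{-1}z))^{-1}$ differ between $z$ and $w$ only by a multiplicative constant depending on $\sigma$. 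Hence one may insert the boundary-layer weight into the inner integral of the Cauchy representation at the cost of this constant, and the pointwise-to-$L^1$ conversion above goes through with the same loss $(\sigma-\sigma')^{-1}$.
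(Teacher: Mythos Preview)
Your proof is correct and follows the standard route; the paper does not give its own argument for this lemma but simply refers to \cite{2N}, Section~2, where the proof is essentially the one you outline (convolution algebra for \eqref{alg1}, the elementary bound $|\alpha|e^{-(\rho-\rho')|\alpha|}\le C/(\rho-\rho')$ for the $\partial_x$ estimate, and a Cauchy estimate on disks of radius comparable to $(\sigma-\sigma')\psi(\Re z)$ for the $\psi\partial_z$ estimate).

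Two minor clarifications. First, your Fubini step for the $\mathcal L^1$ norm is cleaner if phrased as a change of variables rather than a counting argument: parametrizing the Cauchy circle by angle $\varphi$, the map $z\mapsto z+r(z)e^{i\varphi}$ sends $\partial\Omega_{\theta'}$ to a curve lying (up to a bi-Lipschitz change of parameter) on some $\partial\Omega_{\theta(\varphi)}$ with $\theta(\varphi)<\sigma$, so the inner integral is bounded by $\|f_\alpha\|_{L^1_\sigma}$ uniformly in $\varphi$. Second, the boundary-layer norm $\|\cdot\|_{\rho,\sigma,\delta}$ is a weighted \emph{sup} norm in $z$, not an $L^1$ norm; hence for the $\psi\partial_z$ estimate in that norm the Cauchy bound together with your weight-comparability observation already gives a pointwise-to-pointwise inequality, and the ``pointwise-to-$L^1$ conversion'' you mention is not needed there---that case is actually easier than the $\mathcal L^1_{\rho,\sigma}$ case.
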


\begin{lemma}[Elliptic estimates] \label{inverseLaplace}
Let $\phi$ be the solution of
$
- \Delta \phi = \omega
$
with the zero Dirichlet boundary condition, and set $u = \nabla^\perp \phi$. Then, there hold 
\beq \label{laplace-5}
\| u_1\|_{\mathcal L^\infty_{\rho,\sigma}}+\| u_2\| _{\mathcal L^\infty_{\rho,\sigma}}\le C\| \omega\| _{\mathcal L^1_{\rho,\sigma}} ,
\eeq 
\beq 
\label{laplace-6}
\| \partial_{x}u_1\| _{\mathcal L^\infty_{\rho,\sigma}}+\| \nabla u_2\| _{\mathcal L^\infty_{\rho,\sigma}} + \| \psi^{-1}u_2\| _{L^\infty_{\rho,\sigma}} \le C\| \omega\|  _{\mathcal L^1_{\rho,\sigma}}+C\|  \partial_{x}\omega\| _{\mathcal L^1_{\rho,\sigma}} ,
\eeq 
\beq \label{laplace-7}
\| \nabla u_1 \|_{\mathcal L^1_{\rho,\sigma}} + \|\nabla u_2 \|_{\mathcal L^1_{\rho,\sigma}} \le C\| \omega\|  _{\mathcal L^1_{\rho,\sigma}}, 
\eeq 
 with $\psi(z) = z/ (1+z)$, for some constant $C$.
\end{lemma}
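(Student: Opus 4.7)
The plan is to use the explicit Biot--Savart kernel on each Fourier mode in $x$ and then sum in $\alpha$ against the weight $e^{\rho|\alpha|}$. From $-\Delta\phi = \omega$, $\phi\vert_{z=0}=0$, one computes the Green kernel exactly as in the proof of Lemma 2.2, so that for $\alpha\neq 0$
\[
\phi_\alpha(z) \;=\; \frac{1}{2|\alpha|}\int_0^\infty \bigl(e^{-|\alpha||y-z|} - e^{-|\alpha|(y+z)}\bigr)\omega_\alpha(y)\,dy,
\]
with $u_{1,\alpha}=\partial_z\phi_\alpha$ and $u_{2,\alpha}=-i\alpha\phi_\alpha$; the zero mode is treated separately by direct integration of $\partial_z^2\phi_0 = -\omega_0$. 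All three estimates then reduce to pointwise or $L^1_z$ bounds on the explicit exponential kernels appearing above.

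For \eqref{laplace-5}, each kernel in the formulas for $u_{1,\alpha}$ and $u_{2,\alpha}$ is bounded by $O(1)$ uniformly in $\alpha$, so $|u_{j,\alpha}(z)|\lesssim \|\omega_\alpha\|_{L^1_\sigma}$; summation gives the claim. For \eqref{laplace-6}, multiplying by $i\alpha$ gives $|\partial_x u_{j,\alpha}(z)|\lesssim |\alpha|\|\omega_\alpha\|_{L^1_\sigma}$, which sums to $C\|\partial_x\omega\|_{\mathcal L^1_{\rho,\sigma}}$; since $\partial_z u_2=-\partial_x u_1$ (from the definition of $u$), the same bound controls the full gradient of $u_2$. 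The weighted bound $\|\psi^{-1}u_2\|$ uses the non-penetration $u_{2,\alpha}(0)=-i\alpha\phi_\alpha(0)=0$: near $z=0$, $|u_{2,\alpha}(z)|/z\le \|\partial_z u_{2,\alpha}\|_{L^\infty_\sigma}$ by the fundamental theorem of calculus, while for $z\ge 1$ the factor $(1+z)/z\le 2$ reduces the estimate to \eqref{laplace-5}.

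The main quantitative input is for \eqref{laplace-7}: although $e^{-|\alpha||y-z|}$ is only $L^\infty_z$, the rescaled kernels are uniformly $L^1_z$-integrable,
\[
\int_0^\infty |\alpha|\,e^{-|\alpha||y-z|}\,dz \le 2, \qquad \int_0^\infty |\alpha|\,e^{-|\alpha|(y+z)}\,dz = e^{-|\alpha|y}\le 1,
\]
uniformly in $y>0$ and $\alpha\neq 0$. Since $\partial_x u_{1,\alpha}=i\alpha u_{1,\alpha}$, $\partial_x u_{2,\alpha}=\alpha^2\phi_\alpha$, and $\partial_z u_{1,\alpha}=\alpha^2\phi_\alpha-\omega_\alpha$ (from the ODE $(\alpha^2-\partial_z^2)\phi_\alpha=\omega_\alpha$), each admits a representation $\int K_\alpha(z,y)\omega_\alpha(y)\,dy$ with $K_\alpha$ a multiple of one of the scaled exponentials above. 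Fubini then yields $\|\partial u_{j,\alpha}\|_{L^1_\sigma}\lesssim \|\omega_\alpha\|_{L^1_\sigma}$ and summation in $\alpha$ gives \eqref{laplace-7}. The extension to the pencil domain $\Omega_\sigma$ is routine, since the kernels are entire in $z$ and $|e^{-|\alpha|z}|=e^{-|\alpha|\Re z}$ on $\Omega_\sigma$, so the real-axis bounds transfer by contour deformation with identical constants.

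There is no serious conceptual obstacle. The only delicate point is rewriting $\partial_z u_1$ via the ODE into a sum of $\omega_\alpha$ and $\alpha^2\phi_\alpha$ for which the good $L^1_z$ bound applies; without this observation, the naive estimate would lose one derivative and \eqref{laplace-7} would fail.
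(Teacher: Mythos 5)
Your proof is correct, and it is essentially the expected argument: the paper itself does not prove this lemma but refers to \cite{2N} (Section 2), where the estimates are obtained exactly via the mode-wise Biot--Savart kernel $\phi_\alpha(z)=\frac{1}{2|\alpha|}\int_0^\infty(e^{-|\alpha||y-z|}-e^{-|\alpha|(y+z)})\omega_\alpha(y)\,dy$, the uniform $O(1)$ pointwise bounds on the kernels for the $L^\infty$ estimates, the uniform $L^1_z$ bound $\int|\alpha|e^{-|\alpha||y-z|}dz\le 2$ plus Fubini for the $L^1$ estimate, and the rewriting $\partial_z^2\phi_\alpha=\alpha^2\phi_\alpha-\omega_\alpha$ to avoid losing a derivative. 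You correctly identified that last identity as the crux of \eqref{laplace-7}, and your treatment of the zero mode, of $\psi^{-1}u_2$ via $u_{2,\alpha}(0)=0$, and of the pencil-domain extension all align with the standard treatment.
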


\begin{lemma}[Bilinear estimates]\label{lem-bilinear}
For any $\omega$ and $\tilde \omega$,
denoting by $v$ the velocity related to $\omega$, we have
$$
\begin{aligned}
 \| v\cdot \nabla \tilde \omega \|_{\mathcal L^1_{\rho,\sigma}} &\le C \| \omega\|_{\mathcal L^1_{\rho,\sigma}}\| \tilde \omega_x\|_{\mathcal L^1_{\rho,\sigma}} + C (\| \omega\|_{\mathcal L^1_{\rho,\sigma}} +\|\omega_x\|_{\mathcal L^1_{\rho,\sigma}} ) \| \psi(z)\partial_z\tilde \omega\|_{\mathcal L^1_{\rho,\sigma}}
 \\
  \| v\cdot \nabla \tilde \omega \|_{\rho,\sigma,\delta} &\le C \| \omega\|_{\rho,\sigma,\delta}\| \tilde \omega_x\|_{\rho,\sigma,\delta} + C (\| \omega\|_{\rho,\sigma,\delta} +\|\omega_x\|_{\rho,\sigma,\delta} ) \| \psi(z)\partial_z\tilde \omega\|_{\rho,\sigma,\delta}
 \end{aligned}$$
\end{lemma}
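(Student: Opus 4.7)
The plan is to split the nonlinearity into horizontal and vertical convection and treat each piece using the algebra property from Lemma \ref{loss-1} together with the elliptic bounds in Lemma \ref{inverseLaplace}. Write
\[
v\cdot\nabla\tilde\omega = v_{1}\,\partial_{x}\tilde\omega + v_{2}\,\partial_{z}\tilde\omega.
\]
For the tangential piece, apply the $L^\infty\cdot L^1$ algebra estimate \eqref{alg1} followed by the $L^\infty$ velocity bound \eqref{laplace-5}:
\[
\|v_{1}\,\partial_{x}\tilde\omega\|_{\mathcal L^{1}_{\rho,\sigma}}
\le \|v_{1}\|_{\mathcal L^{\infty}_{\rho,\sigma}}\|\tilde\omega_{x}\|_{\mathcal L^{1}_{\rho,\sigma}}
\le C\|\omega\|_{\mathcal L^{1}_{\rho,\sigma}}\|\tilde\omega_{x}\|_{\mathcal L^{1}_{\rho,\sigma}},
\]
which produces the first term on the right-hand side.

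The vertical piece is the delicate one, since $\partial_{z}\tilde\omega$ is only controlled after multiplication by the weight $\psi(z)=z/(1+z)$ (this is the $\partial_z$ loss reflected in \eqref{alg2}). The trick is to compensate by the vanishing of the normal velocity at the boundary: since $v_{2}|_{z=0}=0$, the elliptic estimate \eqref{laplace-6} gives $\psi^{-1}v_{2}\in \mathcal L^{\infty}_{\rho,\sigma}$. Rewrite
\[
v_{2}\,\partial_{z}\tilde\omega = \bigl(\psi^{-1}v_{2}\bigr)\cdot\bigl(\psi(z)\,\partial_{z}\tilde\omega\bigr),
\]
and apply \eqref{alg1} together with \eqref{laplace-6}:
\[
\|v_{2}\,\partial_{z}\tilde\omega\|_{\mathcal L^{1}_{\rho,\sigma}}
\le \|\psi^{-1}v_{2}\|_{\mathcal L^{\infty}_{\rho,\sigma}}\,\|\psi\,\partial_{z}\tilde\omega\|_{\mathcal L^{1}_{\rho,\sigma}}
\le C\bigl(\|\omega\|_{\mathcal L^{1}_{\rho,\sigma}}+\|\omega_{x}\|_{\mathcal L^{1}_{\rho,\sigma}}\bigr)\|\psi(z)\partial_{z}\tilde\omega\|_{\mathcal L^{1}_{\rho,\sigma}}.
\]
Adding the two contributions yields the first inequality.

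For the boundary-layer version, the argument is identical in structure, using the boundary-layer analogues of \eqref{alg1}--\eqref{alg2} that are stated at the end of Lemma \ref{loss-1}, and the corresponding analogues of the elliptic bounds \eqref{laplace-5}--\eqref{laplace-6} in the norm $\|\cdot\|_{\rho,\sigma,\delta}$ (which hold since the Biot--Savart kernels $e^{-\alpha|y\pm z|}$ preserve the boundary-layer profile $\delta^{-1}\phi_{P}(\delta^{-1}\cdot)$ in the definition \eqref{def-bl0}, and $u_{2}$ again vanishes at $z=0$).

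The main obstacle is the $\partial_{z}$ piece: without the cancellation coming from $u_{2}|_{z=0}=0$, one would lose a full $z$-derivative with no way to recover it, and there would be no algebra inequality available. The proof is essentially a bookkeeping exercise once one recognizes that the weight $\psi(z)$ produced by Cauchy's estimate in $z$ is exactly absorbed by the boundary behavior of $u_{2}$.
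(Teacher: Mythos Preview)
Your argument is correct and is exactly the one underlying the paper's citation of \cite{2N}, Section~2: split $v\cdot\nabla\tilde\omega=v_1\,\tilde\omega_x+(\psi^{-1}v_2)(\psi\,\partial_z\tilde\omega)$ and combine the algebra property \eqref{alg1} with the elliptic bounds \eqref{laplace-5}--\eqref{laplace-6}. For the boundary-layer inequality the only adjustment is that you do not need new ``analogues'' of the elliptic estimates in the $\|\cdot\|_{\rho,\sigma,\delta}$ norm; you use \eqref{laplace-5}--\eqref{laplace-6} as written (they give $\mathcal L^\infty$ control of $v_1$ and $\psi^{-1}v_2$ from $\|\omega\|_{\mathcal L^1_{\rho,\sigma}}$), apply the boundary-layer version of \eqref{alg1} from Lemma~\ref{loss-1}, and then invoke the embedding of Lemma~\ref{lem-emL1} to replace $\|\omega\|_{\mathcal L^1_{\rho,\sigma}}$ by $\|\omega\|_{\rho,\sigma,\delta}$ on the right-hand side.
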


~\\
We refer the readers to (\cite {2N}, Section 2) for detailed proofs of the above lemmas. 

\section{The Stokes problem}\label{sec-Stokes123}

\subsection{Main propositions}
In this section, we state our Proposition \ref{prop-Stokes} for the inhomogenous Stokes problem 
\beq 
\begin{cases}\label{stokes}
\omega_{t}-\nu\triangle\omega&=f(t,x,y), \qquad \mbox{in}\quad \TT \times \Omega_\sigma,\\
\nu \omega &=u_1, \quad\qquad \mbox{on} \quad y=0,\\
u_2|_{y=0}&=0,\\
\w|_{t=0}&=\w_0.\\
\end{cases}
\eeq 
Let $e^{\nu t B}$ denote the semigroup of the corresponding Stokes problem: namely, the heat equation $\partial_{t}\omega-\nu\Delta\omega = 0$ on $\TT \times \Omega_\sigma$ with the homogenous boundary condition $
(\nu\w-u_1)|_{y=0}=0$.
Solutions to the linear Stokes problem is then constructed via the following Duhamel's integral representation:
 \beq\label{Duh-w}
\omega(t)=e^{\nu tB}\omega_{0} +\int_{0}^{t}e^{\nu(t-s)B} f(s) \; ds
\eeq 

In this section, we shall derive uniform bounds for the Stokes semigroup in analytic spaces, with the analytic norm 
$$\| \omega\|  _{\rho,\sigma,\delta(t)}=\sum_{\alpha\in\mathbb{Z}} e^{\rho|\alpha|} \|\omega_{\alpha}\|_{\sigma,\delta(t)}$$
with the boundary layer norm defined by 
\begin{equation}\label{def-blnormFull}
\| \omega_\alpha\|_{\sigma,\delta(t)}  = \sup_{z\in \Omega_{\sigma}} | \omega_\alpha(z) | e^{\beta \Re z} 
\Bigl( 1 + \delta_t^{-1} \phi_{P} (\delta_t^{-1} z)  +  \delta^{-1} \phi_{P} (\delta^{-1} z)  \Bigr)^{-1} ,
\end{equation}
in which the boundary thicknesses are $\delta_t = \sqrt{\nu t}$ and $\delta = \sqrt \nu $.  As for the initial data, the norm is measured by $\| \omega_\alpha\|_{\sigma,\delta(0)}$, which consists of precisely one boundary layer behavior whose thickness is $\delta = \sqrt \nu$. We introduce 
$$ ||| \omega(t) |||_{\rho,\sigma,\delta(t),k}  = \sum_{j+\ell \le k} \|\partial_x^j (\psi(z)\partial_z)^\ell \omega(t)\|_{\rho,\sigma,\delta(t)}$$
and 
$$ ||| \omega |||_{\mathcal{W}^{k,1}_{\rho,\sigma}} = \sum_{j+\ell \le k} \|\partial_x^j (\psi(z)\partial_z)^\ell \omega(t)\|_{\mathcal L^1_{\rho,\sigma}} .$$ 
 Next, we state our main proposition, which will be proved in Section \ref{conv-proof}: \begin{proposition}\label{prop-Stokes} Let $e^{\nu t B}$ be the semigroup for the linear Stokes problem. Then, $\partial_x$ commutes with $e^{\nu t B}$. In addition, for any $k\ge 0$, and for any $0\le s< t\le T$, there hold 
$$
\begin{aligned}
 ||| e^{\nu t B} f  |||_{\rho,\sigma,\delta(t),k} & \lesssim  ||| f|||_{\rho,\sigma, \delta(0),k},
  \\ ||| e^{\nu (t-s)B}f  |||_{\rho,\sigma,\delta(t),k} & \lesssim \sqrt{\frac t{t-s}}|||f|||_{\mathcal{W}^{k,1}_{\rho,\sigma}}+\sqrt{\frac{t}{s}} ||| f|||_{\rho,\sigma,\delta(s),k} ,
   \end{aligned}$$
uniformly in the inviscid limit. Similarly, we also obtain 
$$
\begin{aligned}
 ||| e^{\nu t B} f  |||_{\mathcal{W}^{k,1}_{\rho,\sigma}}  \lesssim  ||| f|||_{\mathcal{W}^{k,1}_{\rho,\sigma}},    \end{aligned}$$
uniformly in the inviscid limit. 
\end{proposition}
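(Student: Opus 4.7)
The plan is to follow the resolvent / inverse-Laplace-transform strategy used in \cite{2N}, adapted to the nonlocal boundary condition \eqref{bdrcon}. For each tangential Fourier mode $\alpha$, I would write
\[
(e^{\nu t B} f)_\alpha(z) = \frac{1}{2\pi i}\int_{\Gamma_\alpha} e^{\lambda t}\, \bigl((\lambda - \nu B)^{-1} f\bigr)_\alpha(z)\, d\lambda,
\]
where $\Gamma_\alpha$ is a parabolic contour lying in the resolvent set. The commutation $\partial_x e^{\nu tB} = e^{\nu tB}\partial_x$ follows immediately because the equation and the (Fourier-transformed) boundary condition are translation-invariant in $x$; this reduces matters to uniform mode-by-mode bounds and lets me treat $\partial_x^j$ trivially, focusing on the case $k=0$ with the weighted normal derivative $\psi(z)\partial_z$ handled separately via the standard analytic-norm commutator trick of Lemma \ref{loss-1}.

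\textbf{Resolvent kernel and pole cancellation.} Setting $\mu_\alpha(\lambda) = \sqrt{\alpha^2 + \lambda/\nu}$ (principal branch), I solve the one-dimensional resolvent ODE
\[
(\lambda + \nu\alpha^2 - \nu\partial_z^2)\omega_\alpha = f_\alpha,\qquad \nu\omega_\alpha(0) + \int_0^\infty e^{-|\alpha| y}\omega_\alpha(y)\, dy = 0,
\]
by decomposing $\omega_\alpha = \omega_\alpha^{\mathrm{heat}} + c_\alpha(\lambda;\cdot)\, e^{-\mu_\alpha z}$, where $\omega_\alpha^{\mathrm{heat}}$ is built from the free one-dimensional heat half-space kernel and $c_\alpha$ is fixed by the nonlocal BC. Using $\int_0^\infty e^{-(|\alpha|+\mu_\alpha)y}\, dy = (|\alpha|+\mu_\alpha)^{-1}$, the coefficient carries the denominator
\[
D_\alpha(\lambda) = \nu + \frac{1}{|\alpha| + \mu_\alpha(\lambda)}.
\]
Since $\Re\mu_\alpha \ge 0$ on the contour and $\nu > 0$, one has $|D_\alpha|\ge \nu$ uniformly in $\alpha$ and $\lambda$: this is the \emph{pole cancellation} alluded to in the introduction. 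The nonlocal piece contributes an additive, rather than a multiplicative, correction to the invertibility condition, which is precisely what makes the critical-slip case slightly better than no-slip.

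\textbf{Time inversion and pointwise Green-kernel bounds.} I then deform $\Gamma_\alpha$ to a parabolic contour bending into the left half-plane that saturates the heat decay $\Re\lambda\sim -\nu\alpha^2 - \nu(\Im\mu_\alpha)^2$, and perform the $\lambda$-integration using the stationary-phase-type estimates developed in \cite{2N}. This should yield a pointwise bound of schematic form
\[
|G_\alpha(t; z, \tilde z)|\lesssim \frac{1}{\sqrt{\nu t}}\, e^{-|z-\tilde z|^2/C\nu t}\, e^{-\nu\alpha^2 t} + \frac{1}{\sqrt{\nu t}}\, e^{-(z+\tilde z)^2/C\nu t}\, e^{-\nu\alpha^2 t},
\]
a Gaussian heat piece together with a boundary-corrector piece of width $\sqrt{\nu t}$. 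The uniform lower bound on $D_\alpha$ is precisely what prevents any residual singular term that would have spoiled this Gaussian profile.

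\textbf{From pointwise bounds to the stated norm estimates, and the main obstacle.} Each of the four inequalities in the proposition then follows by integrating these pointwise bounds against the weights defining $\|\cdot\|_{\sigma,\delta(t)}$ and $\|\cdot\|_{\mathcal L^1_{\rho,\sigma}}$: the first estimate uses that the heat part preserves an initial $\delta$-layer while the corrector introduces at most a $\delta_t$-layer, matching the weight. The Duhamel-type estimate splits a source at time $s$ into two pieces; the $\mathcal W^{k,1}$ piece uses the standard $L^1_{\tilde z}\to L^\infty_z$ smoothing $(\nu(t-s))^{-1/2}$, which against the $\delta_t$-weight gives $\delta_t/\delta_{t-s} = \sqrt{t/(t-s)}$, while a source carrying a $\delta_s$-layer propagates into a $\delta_t$-layer at cost $\delta_t/\delta_s = \sqrt{t/s}$. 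Derivatives of order $k$ are handled by commuting $\partial_x^j$ freely and by combining the $\psi(z)\partial_z$-differentiation of the kernel with the analytic-norm estimates of Lemma \ref{loss-1}. The main technical obstacle is therefore the derivation of the \emph{precise} pointwise bound on the resolvent kernel uniformly in $\alpha$ and along $\Gamma_\alpha$, requiring a careful split between the regimes $|\lambda|\lesssim\nu\alpha^2$ and $|\lambda|\gg\nu\alpha^2$ and a delicate treatment of the low-frequency mode $\alpha = 0$ where $\mu_\alpha$ degenerates at the origin of the $\lambda$-plane; once this is in place, the rest of the proof is a direct adaptation of the blueprint of \cite{2N}.
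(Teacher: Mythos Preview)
Your overall plan---resolvent construction, inverse Laplace transform to obtain pointwise Gaussian bounds on the temporal Green kernel, then convolution against the boundary-layer weights---is exactly the route the paper takes. The commutation of $\partial_x$ and the reduction to mode-by-mode estimates are handled correctly, and your description of how the factors $\sqrt{t/(t-s)}$ and $\sqrt{t/s}$ arise in the convolution step is accurate.

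The genuine gap is in your treatment of the ``pole cancellation.'' The lower bound $|D_\alpha(\lambda)|\ge \nu$ is true but is \emph{not} the cancellation the paper exploits, and by itself it is not uniform in the inviscid limit: it yields $|1/D_\alpha|\le \nu^{-1}$, which would introduce an extra factor of $\nu^{-1}$ in the residual kernel and destroy the claimed $(\nu t)^{-1/2}$ bound. What is actually needed is $|1/D_\alpha|\lesssim |\alpha|+|\mu_\alpha|$, which comes from the term $\tfrac{1}{|\alpha|+\mu_\alpha}$ in $D_\alpha$, not from the $\nu$. Moreover, your numerator $c_\alpha D_\alpha$---the boundary functional applied to the half-line heat solution---itself carries a factor $\tfrac{e^{-|\alpha|y}-e^{-\mu_\alpha y}}{\mu_\alpha-|\alpha|}$ from the integral $\int_0^\infty e^{-|\alpha|y}\omega_\alpha^{\mathrm{heat}}(y)\,dy$; this has an apparent pole at $\lambda=0$ that you have not addressed. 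The paper instead computes the residual resolvent kernel explicitly as a sum of two terms with a common factor $(\lambda+\mu-\alpha)^{-1}$, and the actual ``special cancellation'' is that these two terms annihilate each other at $\lambda=0$, forcing the residue there to vanish. This is what distinguishes critical slip from no-slip (where the analogous residue is nonzero and produces an additional $e^{-|\alpha|z}$ term in the temporal Green function, cf.\ \cite{2N}).

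Once the residue is known to vanish, the paper does not rely on a single global bound like $|D_\alpha|\ge \nu$ but rather establishes the uniform estimate $\bigl|\tfrac{\alpha-\lambda}{\lambda+\mu-\alpha}\bigr|\lesssim 1$ on carefully chosen contours via a case split that is different from yours: first $\alpha^2\nu\le 1$ versus $\alpha^2\nu\ge 1$, and inside the high-frequency regime a further split according to whether $|a-\alpha|\gtrless \tfrac12\alpha$ with $a=z/(2\nu t)$. The contour in each sub-case is adjusted so that either the pole lies safely to its left or the contour may cross $\lambda=0$ harmlessly thanks to the zero residue. Your proposed split ($|\lambda|$ versus $\nu\alpha^2$, plus a separate treatment of $\alpha=0$) does not line up with this, and there is in fact no special difficulty at $\alpha=0$.
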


\subsection{Duhamel principle}
We first treat the Stokes problem on $\TT \times \RR_+$. By  taking the Fourier transform in $x$, the problem is reduced to 
\beq\label{Stokes-a} 
\begin{aligned} 
\partial_t\omega_\alpha-\nu \Delta_\alpha\omega_\alpha&=f_\alpha(t,z), \qquad \mbox{in}\quad \RR_+\\
\nu \w_\alpha(0)&=-\int_0^\infty e^{-\alpha y}\w_\alpha(y)dy\\
\end{aligned}
\eeq 
in which $\omega_\alpha$ denotes the Fourier transform of $\omega$ with respect to $x$, and $\Delta_\alpha = \partial_z^2 - \alpha^2$. Let $G_\alpha(t,z,y)$ be the corresponding Green function of the linear Stokes problem \eqref{Stokes-a}, together with the initial data $G_\alpha(0,z,y) = \delta_y(z)$.  For each fixed $y \ge 0$, the function $G_\alpha(t,z,y)$ solves 
\beq\label{Stokes-Gr} 
\begin{aligned} 
( \partial_t-\nu \Delta_\alpha) G_\alpha(t,z,y) &=0, \qquad \mbox{in}\quad \RR_+,\\
\nu G_\alpha(t,0,y)&=-\int_0^\infty e^{-\alpha z}G_\alpha(t,z,y)dz
\end{aligned}
\eeq 
 together with the initial data $G_\alpha(0,z,y) = \delta_y(z)$.  
 \begin{proposition}
The solution to \eqref{Stokes-a} is constructed via Duhamel's principle: 
 \begin{equation}\label{Duh-Stokes}
\begin{aligned}
 \omega_\alpha(t,z) &= \int_0^\infty  G_\alpha(t,z,y) \omega_{0,\alpha} (y) \; dy+ \int_0^t\int_0^\infty G_\alpha(t-s,z,y) f_\alpha (s,y)\; dyds. 
\end{aligned} 
\end{equation}
\end{proposition}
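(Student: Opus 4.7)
The plan is to check directly that the two-term formula on the right-hand side of \eqref{Duh-Stokes} satisfies all three conditions of the initial boundary value problem \eqref{Stokes-a}, taking the existence of the Green function $G_\alpha(t,z,y)$ characterized by \eqref{Stokes-Gr} as a given (its construction is the subject of the subsequent subsections). The argument is a standard application of Duhamel's principle, and the only nonstandard ingredient is the nonlocal boundary condition, for which we must be a little careful to interchange integrals.

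Write $I(t,z) = \int_0^\infty G_\alpha(t,z,y) \omega_{0,\alpha}(y)\,dy$ and $II(t,z) = \int_0^t \int_0^\infty G_\alpha(t-s,z,y) f_\alpha(s,y)\,dy\,ds$. First I would verify the PDE: differentiating $I$ under the integral sign and using that, for each fixed $y$, $G_\alpha(\cdot,\cdot,y)$ solves the homogeneous equation in $(t,z)$, gives $(\partial_t - \nu \Delta_\alpha) I = 0$. For $II$, the Leibniz rule together with the initial condition $G_\alpha(0,z,y) = \delta_y(z)$ yields
\[
(\partial_t - \nu \Delta_\alpha) II(t,z) = \int_0^\infty G_\alpha(0,z,y) f_\alpha(t,y)\,dy = f_\alpha(t,z).
\]
For the initial data, $I(0,z) = \int_0^\infty \delta_y(z) \omega_{0,\alpha}(y)\,dy = \omega_{0,\alpha}(z)$ and $II(0,z) = 0$, so the formula produces $\omega_\alpha(0,z) = \omega_{0,\alpha}(z)$ as required.

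Next I would verify the nonlocal boundary condition. Since for each fixed $y$ the Green function satisfies $\nu G_\alpha(t,0,y) = -\int_0^\infty e^{-\alpha z'} G_\alpha(t,z',y)\,dz'$, multiplying by $\omega_{0,\alpha}(y)$ and integrating in $y$, then applying Fubini's theorem, gives
\[
\nu I(t,0) = -\int_0^\infty \int_0^\infty e^{-\alpha z'} G_\alpha(t,z',y)\,dz'\,\omega_{0,\alpha}(y)\,dy = -\int_0^\infty e^{-\alpha z'} I(t,z')\,dz'.
\]
The same swap of integrals yields the identical identity for $II(t,z)$, so the sum $\omega_\alpha = I + II$ satisfies the nonlocal boundary condition at $z = 0$. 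Combined with uniqueness for the linear Stokes problem (standard in the analytic class considered here), this gives \eqref{Duh-Stokes}.

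The main obstacle is not in this verification per se but in the implicit assumption that $G_\alpha$ exists as a sufficiently regular object for which differentiation under the integral and the Fubini swap are justified; in particular one needs the pointwise decay of $G_\alpha(t,z',y)$ in $z'$ to make the nonlocal boundary integral absolutely convergent. These quantitative bounds on $G_\alpha$ (the pointwise resolvent/temporal estimates highlighted in the introduction) are the content of the subsequent sections; once they are in hand the verification above is essentially a formal manipulation.
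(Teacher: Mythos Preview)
Your proposal is correct and follows essentially the same approach as the paper's proof: both verify the PDE directly via the Leibniz rule and the defining properties of $G_\alpha$ (the homogeneous equation and the initial condition $G_\alpha(0,z,y)=\delta_y(z)$), and both check the nonlocal boundary condition by applying the boundary identity for $G_\alpha$ and swapping the $y$ and $z$ integrals via Fubini. Your version is slightly more thorough in that you also explicitly verify the initial data and flag the need for the pointwise bounds on $G_\alpha$ to justify the formal manipulations, but the core argument is identical.
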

\begin{proof}
We first show that $\pt_t\w_\alpha-\nu\triangle_{\alpha}\w_\alpha=f_\alpha$. Without loss of generality, we can assume $\w_{0,\alpha}(y)=0$.
We have 
\[\bega
\pt_t\w_\alpha&=\frac{d}{dt}\lw(\int_0^t \int_0^\infty G_\alpha(t-s,z,y)f_\alpha(s,y)dyds\rw)\\
&=\int_0^\infty G_\alpha(0,z,y)f_\alpha(t,y)dy+\int_0^t \int_0^\infty \lw(\pt_t G_\alpha(t-s,z,y)\rw)f_\alpha(y,s)dyds\\
&=f_\alpha(t,z)+\nu\int_0^t \int_0^\infty\triangle_{\alpha} G_\alpha(t-s,z,y)f_\alpha(y,s)dyds\\
&=f_\alpha(t,z)+\nu\triangle_\alpha \w_\alpha.
\enda
\]
We now check the boundary condition in \eqref{Stokes-a}. Let $z=0$, we have 
\[\bega
\nu \w_\alpha(t,0)&=\nu\int_0^\infty G_\alpha(t,0,y)\w_{0,\alpha}(y)dy+\nu\int_0^t \int_0^\infty G_\alpha(t-s,0,y)f_\alpha(s,y)dyds\\
&=-\int_0^\infty \int_0^\infty \lw(e^{-\alpha z}G_\alpha(t,0,y)dz\rw)\w_{0,\alpha}(y)dy-\int_0^t \int_0^\infty \lw(\int_0^\infty e^{-\alpha z}G_\alpha(t-s,z,y)dz\rw)f_\alpha(s,y)dyds\\
&=-\int_0^\infty e^{-\alpha z}\w_\alpha(t,z)dz.\\
\enda
\]
The proof is complete.
 \end{proof}

 \subsection{The Green function for the Stokes problem}\label{secGreen}
In this section, we derive sufficient pointwise bounds on the temporal Green function for the linear Stokes problem \eqref{Stokes-a}. Precisely, we prove the following. 

\begin{proposition}\label{prop-Stokes-Green} Let $G_{\alpha}(t,z,y)$ be the Green function of the Stokes problem \eqref{Stokes-a}. There holds 
\begin{equation}\label{Stokes-tG}
G_\alpha(t,z,y) = H_\alpha(t,z,y) + R_\alpha (t,z,y), 
\end{equation}
in which $H_\alpha(t,z,y) $ is exactly the one-dimensional heat kernel with the homogenous Neumann boundary condition and $R_\alpha(t,z,y)$ is the residual kernel due to the boundary condition. Precisely, 
There hold 
$$
\begin{aligned}
H_\alpha(t,z,y) & = \frac{1}{\sqrt{4\pi\nu t}} \Big( e^{-\frac{|y-z|^{2}}{4\nu t}} +  e^{-\frac{|y+z|^{2}}{4\nu t}} \Big) e^{-\alpha^{2}\nu t}, 
\\ | \partial_z^k R_\alpha (t,z,y)| &\lesssim (\nu t)^{-k/2}e^{-\theta_0\alpha^2\nu t}\cdot (\nu t)^{-1/2}e^{-\theta_0\frac{ z^2}{4\nu t}}
\end{aligned}$$
for $y,z\ge 0$, $k\ge 0$, and for some $\theta_0>0$. 
\end{proposition}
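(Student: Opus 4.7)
My plan is to decompose $G_\alpha = H_\alpha + R_\alpha$ with $H_\alpha$ taken to be the explicit one-dimensional heat kernel with the homogeneous Neumann boundary condition (multiplied by the damping factor $e^{-\alpha^2 \nu t}$). Since $H_\alpha$ carries the correct initial datum $\delta_y(z)$ and solves the bulk equation, the residual $R_\alpha$ will satisfy $(\partial_t-\nu\triangle_\alpha)R_\alpha = 0$ on $\RR_+$ with zero initial data, together with the inhomogeneous nonlocal boundary condition
\begin{equation*}
\nu R_\alpha(t,0,y) + \int_0^\infty e^{-\alpha z} R_\alpha(t,z,y)\,dz = \Phi_\alpha(t,y),
\end{equation*}
where $\Phi_\alpha(t,y) := - \nu H_\alpha(t,0,y) - \int_0^\infty e^{-\alpha z}H_\alpha(t,z,y)\,dz$ is computable in closed form from the Gaussian expression for $H_\alpha$.

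I would then analyze $R_\alpha$ via the Laplace transform in $t$. Setting $\mu = \mu(\lambda) := \sqrt{(\lambda + \nu\alpha^2)/\nu}$ with $\Re \mu > 0$, the only $z$-decaying solution of the transformed bulk equation is $\hat R_\alpha(\lambda, z, y) = A(\lambda, y) e^{-\mu z}$, and inserting this ansatz into the transformed boundary condition yields
\begin{equation*}
A(\lambda,y) \;=\; \frac{\mu + \alpha}{\nu(\mu + \alpha) + 1}\; \hat \Phi_\alpha(\lambda, y).
\end{equation*}
A direct computation, using the explicit resolvent $\hat H_\alpha(\lambda, z, y) = \frac{1}{2\nu\mu}\bigl(e^{-\mu|z-y|} + e^{-\mu(z+y)}\bigr)$, shows that $\hat\Phi_\alpha(\lambda, y)$ contains a summand proportional to $\frac{1}{\mu+\alpha}\bigl(e^{-\alpha y} + e^{-\mu y}\bigr)$, which is an apparent pole at $\mu = -\alpha$ on the non-physical sheet. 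The crucial observation is that the $(\mu + \alpha)$ factor arising in the numerator of $A$ from the nonlocal boundary operator exactly cancels this $\frac{1}{\mu+\alpha}$ term; this is the \emph{special cancellation of the pole} mentioned in Section \ref{secGreen}. In addition, the Evans-type denominator $\nu(\mu + \alpha) + 1$ does not vanish for $\Re\mu \ge 0$ and $\alpha \ge 0$, with $|\nu(\mu + \alpha) + 1|^{-1} \lesssim (\nu |\mu| + 1)^{-1}$. Combining these facts with $|e^{-\mu y}|, |e^{-\alpha y}|\le 1$ on the physical sheet produces a $y$-uniform bound on $A(\lambda, y)$ in the resolvent parameter.

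To recover the pointwise kernel estimate, I would invert the Laplace transform through the substitution $\lambda = \nu k^2 - \nu\alpha^2$, so that $\mu = k$ and $d\lambda = 2\nu k\,dk$. Then
\begin{equation*}
\partial_z^\ell R_\alpha(t, z, y) \;=\; \frac{1}{2\pi i} \int_\Gamma e^{\nu k^2 t - \nu \alpha^2 t - k z}\;(-k)^\ell\; A(\lambda(k), y)\; 2\nu k\, dk,
\end{equation*}
and completing the square $\nu t\, k^2 - kz = \nu t\bigl(k - z/(2\nu t)\bigr)^2 - z^2/(4\nu t)$ permits a deformation of $\Gamma$ through the saddle $k_* = z/(2\nu t)$. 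The Gaussian width $(\nu t)^{-1/2}$ of the saddle, combined with the bound on $A$ and the $(\nu t)^{-\ell/2}$ factor produced by the $k^\ell$ derivative weight, yields the claimed estimate with $\theta_0 \in (0,1)$ determined by the angle of the deformed contour. The absence of any $y$-dependence on the right-hand side of the bound follows directly from the $y$-uniform bound on $A$.

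The main obstacle is justifying the cancellation in $A(\lambda, y)$ precisely and extracting from it a sharp $y$-independent bound. Without the cancellation, the apparent pole at $\mu = -\alpha$ in $\hat\Phi_\alpha$ would survive, producing residues that decay only polynomially in $t$ and grow in $y$ — insufficient to close the nonlinear iteration in analytic boundary-layer norms. Once the cancellation is in hand, the remainder is a standard parabolic saddle-point analysis, with additional care in tracking the $(\nu t)^{-\ell/2}$ weights produced by the $\partial_z^\ell$ derivatives and, for $\ell \ge 1$, absorbing the polynomial growth $|k|^\ell$ at the saddle into the Gaussian at the cost of a slightly smaller $\theta_0$.
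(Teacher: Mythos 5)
Your decomposition is algebraically equivalent to the paper's. The paper solves the full resolvent boundary-value problem for $G_{\lambda,\alpha}$ by the ansatz $c_1e^{\mu z}+c_2e^{-\mu z}$ ($z<y$), $c_3e^{-\mu z}$ ($z>y$) and then splits off the Neumann resolvent, landing on
\[
R_{\lambda,\alpha}(z,y)=\Big[\tfrac{1}{\mu\nu}\tfrac{\alpha-\lambda}{\lambda+\mu-\alpha}e^{-\mu y}-\tfrac{1}{\nu(\lambda+\mu-\alpha)}e^{-\alpha y}\Big]e^{-\mu z},
\]
and one checks by direct algebra that this coincides with your $A(\lambda,y)e^{-\mu z}$ (after using $\lambda+\mu-\alpha=(\mu-\alpha)(\nu(\mu+\alpha)+1)$). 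So the ``subtract $H_\alpha$ first and solve a BVP for $R_\alpha$'' route is fine and reproduces the same kernel.

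However, you have misidentified which cancellation is the relevant one. Your $\mu=-\alpha$ pole is off the physical sheet: on the paper's contours $\Gamma_\pm,\Gamma_c,\Gamma_1,\Gamma_2$ one always has $\Re\mu>0$, so $\mu=-\alpha$ is never approached and contributes no residue regardless of whether it cancels. The pole the paper (and the introduction's ``special cancellation'') is concerned with is at $\lambda=0$, i.e.\ $\mu=\alpha$, which is crossed by the deformed contour $\Gamma_1$ when the saddle parameter $a=z/(2\nu t)$ hits $\alpha$; the paper shows $(\lambda+\mu-\alpha)R_{\lambda,\alpha}\vert_{\lambda=0}=\tfrac{\alpha}{\mu\nu}e^{-\mu(y+z)}-\tfrac1\nu e^{-\alpha y-\mu z}\vert_{\mu=\alpha}=0$, so the singularity is removable. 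In your formulation this shows up as the finite-difference quotient $(e^{-\alpha y}-e^{-\mu y})/(\mu-\alpha)$ inside $\hat\Phi_\alpha$, which is entire in $\mu$ — that is the clean feature of your decomposition, and it is the observation you should be highlighting.

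Related to this, your claimed ``$y$-uniform bound on $A$'' does not follow by bounding $|e^{-\mu y}|,|e^{-\alpha y}|\le 1$ term by term: once you split $A$ into the two summands above, each carries a genuine $1/(\mu-\alpha)$ factor, so you must show $|\alpha/(\mu-\alpha)|\lesssim 1$ on the deformed contour. This is precisely where the paper's labor is: it splits into $\alpha^2\nu\le1$ vs.\ $\alpha^2\nu\ge1$, and within the latter into $|a-\alpha|\ge\tfrac12\alpha$ (use $\Gamma_1$, then $|\mu-\alpha|\ge|a-\alpha|\ge\tfrac12\alpha$) and $|a-\alpha|\le\tfrac12\alpha$ (shift to $\Gamma_2=\{\lambda=-\tfrac18\nu\alpha^2+\nu(a+ib)^2\}$ and use $|\lambda|\ge\tfrac18\nu\alpha^2$ together with $\lambda=\nu(\mu^2-\alpha^2)$ to conclude $|\alpha/(\mu-\alpha)|\lesssim1$). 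If you want to keep the difference-quotient form unbroken, you instead need the elementary bound $|(e^{-\alpha y}-e^{-\mu y})/(\mu-\alpha)|\le y\,e^{-\min(\alpha,\Re\mu)y}\lesssim 1/\min(\alpha,\Re\mu)$, and then you must control $\min(\alpha,\Re\mu)^{-1}$ against the other factors on the contour — this again reduces to the same case analysis. Either way, the ``standard parabolic saddle-point analysis'' you defer to is exactly the nontrivial contour/case bookkeeping that constitutes the bulk of the paper's proof, and it cannot be dismissed as routine without verifying the $|\alpha/(\mu-\alpha)|\lesssim1$ estimate uniformly in $\alpha,\nu,t,z$.
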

We proceed the construction of the Green function via the resolvent equation. Namely, for each fixed $y\ge 0$, let $G_{\lambda,\alpha}(y,z)$ be the $L^1$ solution to the resolvent problem 
\beq\label{Stokes-res}
\bega 
( \lambda - \nu \Delta_\alpha )G_{\lambda,\alpha}(y,z)&=\delta_{y}(z)\\
\nu G_{\lambda,\alpha}(0,y) &=-\int_0^\infty e^{-\alpha z}G_{\lambda,\alpha}(z,y)dz.
\enda 
\eeq 
Here, the second non-local boundary condition is derived as follows:
Given a forcing term  $f(y)$, we look for solution of the form:
\[
\w_{\lambda,\alpha}(z)=\int_0^\infty G_{\lambda,\alpha}(z,y)f(y)dy
\]
We define the operator $L$ to be $L=-\nu(\pt_z^2-\mu^2)$, where $\mu=\sqrt{\frac{\lambda}{\nu}+\alpha^2}$ with positive real part. Then we get 
\[
L\w_{\lambda,\alpha}(z)=\int_0^\infty LG_{\lambda,\alpha}(y,z)f(y)dy=\int_0^\infty \delta(y-z)f(y)dy=f(z).
\]
Putting this in the boundary condition \eqref{Stokes-a} we get
\[\bega
\nu \int_0^\infty G_{\lambda,\alpha}(0,y)f(y)dy&=-\int_0^\infty e^{-\alpha z}\lw(\int_0^\infty G_{\lambda,\alpha}(z,y)f(y)dy\rw)dz.\\
\enda\]
Hence we have 
\[\bega
\int_0^\infty(\nu G_{\lambda,\alpha}(0,y))f(y)dy&=-\int_0^\infty \lw(\int_0^\infty e^{-\alpha z}G_{\lambda,\alpha}(z,y)dz\rw)f(y)dy.
\enda 
\]
Thus we take the following condition on the Green function 
\beq \label{bdr1}
\nu G_{\lambda,\alpha}(0,y)=-\int_0^\infty e^{-\alpha z}G_{\lambda,\alpha}(z,y)dz\qquad \text{for any}\qquad y\ge 0
\eeq 
We then obtain the following:

\begin{lemma} Let $\mu = \nu^{-1/2}\sqrt{\lambda + \alpha^2 \nu}$, having positive real part. There holds 
\beq \label{green}
G_{\lambda,\alpha}(z,y) = H_{\lambda,\alpha}(z,y) + R_{\lambda,\alpha}(z,y)
\eeq 
in which $H_{\lambda,\alpha}(y,z) $ denotes the resolvent kernel of the heat problem with homogenous Neumann boundary condition and $R_{\alpha,\lambda}(z,y)$ denotes the residue resolvent kernel; namely, 
\[
\begin{cases}
H_{\lambda,\alpha}(z,y)&=\frac{1}{2\mu \nu}\lw(e^{-\mu|y-z|}+e^{-\mu(y+z)}\rw),\\
R_{\lambda,\alpha}(z,y)&=\frac{1}{\mu \nu}\frac{\alpha-\lambda}{\lambda+\mu-\alpha}e^{-\mu(y+z)}-\frac{1}{\nu(\lambda+\mu-\alpha)}e^{-\alpha y-\mu z}.\\
\end{cases}
\]
In particular, $G_{\lambda,\alpha}(y,z)$ is meromorphic with respect to $\lambda$ in $\CC \setminus \{ -\alpha^2 \nu - \RR_+\}$ with a pole at $\lambda =0$. 
\end{lemma}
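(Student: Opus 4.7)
The plan is to treat the resolvent problem \eqref{Stokes-res} as a linear ODE in $z$ for each fixed source location $y\ge 0$ and to use the standard ansatz that superposes the half-line heat resolvent with one $L^1$ homogeneous mode tuned to satisfy the nonlocal boundary condition.

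First, I would write $G_{\lambda,\alpha}(z,y) = H_{\lambda,\alpha}(z,y) + R_{\lambda,\alpha}(z,y)$, where
\[
H_{\lambda,\alpha}(z,y) \;=\; \frac{1}{2\mu\nu}\bigl(e^{-\mu|z-y|}+e^{-\mu(z+y)}\bigr)
\]
is the resolvent kernel of the heat problem with a \emph{homogeneous Neumann} boundary at $z=0$. Because $H_{\lambda,\alpha}$ already carries the Dirac mass, the residual $R_{\lambda,\alpha}$ must satisfy the homogeneous ODE $(\partial_z^2-\mu^2)R=0$, and the requirement $G\in L^1(\RR_+)$ together with $\Re\mu>0$ forces $R_{\lambda,\alpha}(z,y) = B(y)\,e^{-\mu z}$ for a single scalar coefficient $B(y)$ to be determined.

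Second, I would compute $B(y)$ by plugging $G=H+R$ into the boundary condition $\nu G(0,y) + \int_0^\infty e^{-\alpha z}G(z,y)\,dz=0$. Direct integration, using $\nu(\mu^2-\alpha^2)=\lambda$, yields
\[
\int_0^\infty e^{-\alpha z}H_{\lambda,\alpha}(z,y)\,dz = \frac{\mu e^{-\alpha y}-\alpha e^{-\mu y}}{\lambda},\qquad \int_0^\infty e^{-(\alpha+\mu)z}\,dz = \frac{1}{\alpha+\mu}.
\]
Solving the resulting linear equation for $B(y)$ and invoking the key algebraic identity
\[
(\mu-\alpha)\bigl(1+\nu(\alpha+\mu)\bigr) \;=\; (\mu-\alpha)+\nu(\mu^2-\alpha^2) \;=\; \lambda+\mu-\alpha
\]
telescopes the cumbersome rational expression into the claimed compact form for $R_{\lambda,\alpha}$. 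Meromorphicity on $\CC\setminus\{-\alpha^2\nu-\RR_+\}$ then follows immediately: the only non-analytic feature of $\mu=\nu^{-1/2}\sqrt{\lambda+\alpha^2\nu}$ is the branch cut $\{-\alpha^2\nu-\RR_+\}$; the factor $1+\nu(\alpha+\mu)$ is uniformly nonzero on the cut plane thanks to $\Re\mu>0$; and the only remaining zero of $\lambda+\mu-\alpha$ on this domain is located at $\lambda=0$, where $\mu=\alpha$.

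The main obstacle is the algebra of the second step. Without the factorization $(\mu-\alpha)(1+\nu(\alpha+\mu))=\lambda+\mu-\alpha$, itself a direct consequence of the critical exponent $\beta=1$ in the slip law, the expression for $B(y)$ is a clumsy rational function in $\mu,\alpha,\lambda$ whose pole structure is opaque. This same identity is the source of the ``special cancellation of the pole'' emphasized in the introduction: as $\lambda\to 0$ one has $\mu\to\alpha$, so the two exponential pieces of $R_{\lambda,\alpha}$ collapse to $e^{-\mu z}(e^{-\mu y}-e^{-\alpha y})=O(\mu-\alpha)=O(\lambda)$, and a short residue computation shows that the two prefactors tend to a common limit, making the apparent simple pole at $\lambda=0$ removable. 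This zero residue is precisely what will allow the Stokes-semigroup bounds stated in Proposition \ref{prop-Stokes-Green} to be extracted from the inverse Laplace contour integral without a stationary contribution or a logarithmic divergence at the origin.
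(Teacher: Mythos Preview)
Your proof is correct and takes a somewhat different route from the paper. The paper writes the full piecewise ansatz $G = c_1 e^{\mu z} + c_2 e^{-\mu z}$ for $z<y$ and $c_3 e^{-\mu z}$ for $z>y$, then solves the three linear equations coming from continuity, the jump of $-\nu\partial_z G$, and the nonlocal boundary condition, before regrouping the result into $H+R$ at the end. You instead start from the known Neumann resolvent $H_{\lambda,\alpha}$, which already encodes the delta source and the jump, and reduce the problem to a single unknown $B(y)$ determined by the boundary condition alone. This is a cleaner derivation: the factorization $(\mu-\alpha)\bigl(1+\nu(\alpha+\mu)\bigr)=\lambda+\mu-\alpha$ that you isolate is precisely the mechanism producing the denominator $\lambda+\mu-\alpha$, something the paper reaches only by direct calculation without naming the identity. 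Your remark that the apparent pole at $\lambda=0$ is in fact removable is also correct and anticipates the residue computation $\mathrm{Res}_0=0$ that the paper performs later in the proof of Proposition~\ref{prop-Stokes-Green}.

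One minor slip: your intermediate formula
\[
\int_0^\infty e^{-\alpha z}H_{\lambda,\alpha}(z,y)\,dz \;=\; \frac{\mu e^{-\alpha y}-\alpha e^{-\mu y}}{\lambda}
\]
is off by a factor of $\mu$ in the denominator; the correct value is $\dfrac{\mu e^{-\alpha y}-\alpha e^{-\mu y}}{\mu\lambda}$. This does not affect the final expression for $B(y)$, which comes out exactly as stated once the algebra is carried through with the corrected integral.
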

\begin{proof} 
We have 
\beq \label{form1}
G_{\lambda,\alpha}(z,y)=\begin{cases}
c_1(y) e^{\mu z}+c_2(y) e^{-\mu z}&,\qquad z<y\\
c_3(y) e^{-\mu z}&,\qquad z>y\\
\end{cases}
\eeq 
The continuity of $G_{\lambda,\alpha}$ at $z=y$ gives 
\beq \label{eq1}
c_1(y)e^{2\mu y}+c_2(y)=c_3(y).
\eeq
Now, the jump condition of $-\nu \pt_zG_{\lambda,\alpha}$ at $z=y$ gives 
\beq \label{eq2}
c_3(y)=-c_1(y)e^{2\mu y}+c_2(y)+\frac{1}{\mu \nu}e^{\mu y}.\\
\eeq
Combining \eqref{eq1} and \eqref{eq2}, we get 
\beq \label{c1}
c_1(y)=\frac{1}{2\mu \nu}e^{-\mu y}.
\eeq
and hence \eqref{eq2} becomes 
\beq\label{eq3}
c_3(y)=c_2(y)+\frac{1}{2\mu \nu}e^{\mu y}.
\eeq
Now we find $c_2$. Using the boundary condition \eqref{bdr1} and the form of $G_{\lambda,\alpha}$ in \eqref{form1}, we have 
\[
-\nu (c_1(y)+c_2(y))=\int_0^ye^{-\alpha z}\lw(c_1(y)e^{\mu z}+c_2(y)e^{-\mu z}\rw)dz+\int_y^\infty e^{-\alpha z}\lw(c_3(y)e^{-\mu z}\rw)dz.\]
By a direct calculation, we get
\beq\label{c2}
c_2(y)=\frac{1}{2\mu \nu}\frac{\mu+\alpha-\lambda}{\lambda+\mu-\alpha}e^{-\mu y}-\frac{1}{\nu(\lambda+\mu-\alpha)}e^{-\alpha y}.
\eeq
Combining the above equation with \eqref{eq3}, we have 
\beq\label{c3}
c_3(y)=\frac{1}{2\mu\nu}e^{\mu y}+\frac{1}{2\mu \nu}\frac{\mu+\alpha-\lambda}{\lambda+\mu-\alpha}e^{-\mu y}-\frac{1}{\nu(\lambda+\mu-\alpha)}e^{-\alpha y}.
\eeq
Hence, putting $c_1,c_2,c_3$, computed in \eqref{c1},\eqref{c2},\eqref{c3}, in the formula of $G_{\lambda,\alpha}(z,y)$ in \eqref{form1}, we get
\[\bega
G_\lambda(z,y)
&=H_{\lambda,\alpha}(z,y)+R_{\lambda,\alpha}(z,y),\\
\enda 
\]
where 
\[
\begin{cases}
H_{\lambda,\alpha}(z,y)&=\frac{1}{2\mu \nu}\lw(e^{-\mu|y-z|}+e^{-\mu(y+z)}\rw)\\
R_{\lambda,\alpha}(z,y)&=\frac{1}{\mu \nu}\frac{\alpha-\lambda}{\lambda+\mu-\alpha}e^{-\mu(y+z)}-\frac{1}{\nu(\lambda+\mu-\alpha)}e^{-\alpha y-\mu z}.\\
\end{cases}
\]
This completes the proof. \end{proof}

\begin{proof}[Proof of Proposition \ref{prop-Stokes-Green}]

The temporal Green function $G_\alpha(t,z,y)$ can then be constructed via the inverse Laplace transform: 
\begin{equation}\label{Stokes-Green} G_{\alpha}(t,z,y)=\frac{1}{2\pi i}\int_\Gamma e^{\lambda t}G_{\lambda,\alpha}(z,y)d\lambda\end{equation}
in which the contour of integration $\Gamma$ is taken such that it remains on the right of the (say, $L^2$) spectrum of the linear operator $\lambda - \nu \Delta_\alpha$, which is $-\alpha^2 \nu - \RR_+$. 

In view of \eqref{green}, we set $H_{\alpha}(t,z,y)$ and $R_{\alpha}(t,z,y)$ to be the corresponding temporal Green function of $H_{\lambda,\alpha}(z,y)$
 and $R_{\lambda,\alpha}(z,y)$, respectively. It follows that $H_{\alpha}(t,z,y)$ is the temporal Green function of the one-dimensional heat problem with the homogenous Neumann boundary condition, yielding 
 $$H_{\alpha}(t,z,y) = \frac{1}{\sqrt{4\pi \nu t}} \Big( e^{-\frac{|y-z|^2}{4\nu t}} + e^{-\frac{|y+ z|^2}{4\nu t}} \Big) e^{-\nu \alpha^2 t}.$$
~\\
It remains to compute the residual Green function
\beq\label{Rbound}
\bega
R_\alpha(t,z,y)&=\dfrac{1}{2\pi i}\int_\Gamma e^{\lambda t}R_{\lambda,\alpha}(z,y)d\lambda,\\
R_{\lambda,\alpha}(z,y)&=\frac{1}{\mu \nu}\frac{\alpha-\lambda}{\lambda+\mu-\alpha}e^{-\mu(y+z)}-\frac{1}{\nu(\lambda+\mu-\alpha)}e^{-\alpha y-\mu z}.\\
\enda
\eeq
We note that $R_{\lambda,\alpha}$ has a pole when $\lambda+\mu-\alpha=0$, which happens only when $\lambda=0$.\\
We consider two cases: when $\alpha^2\nu\le 1$ and when $\alpha^2\nu\ge 1$.\\

~\\
\textbf{Case 1:} $\alpha^2\nu\le 1$. \\
Let us give a bound on the first part of the kernel $R_{\lambda,\alpha}$ in \eqref{Rbound}:
\[
R_{\lambda,\alpha}^1(z,y)=\frac{1}{\mu \nu}\cdot\frac{\alpha-\lambda}{\lambda+\mu-\alpha}e^{-\mu(y+z)}.
\]
By Cauchy's theory, we may decompose the contour of integration as $\Gamma=\Gamma_\pm\cup\Gamma_c$, having 
\[
\bega 
\Gamma_\pm&=\bigg\{\lambda=-\frac{1}{2}\alpha^2\nu+\nu(a^2-b^2)+2ab\nu i\pm iM,\qquad \pm b\in \mathbb{R}_+\bigg\},
\\
\Gamma_c&=\bigg\{\lambda=-\frac{1}{2}\alpha^2\nu+\nu a^2+Me^{i\theta},\quad \theta\in [-\pi/2,\pi/2]\bigg\}.
\enda 
\]
for some positive number $M$ and $a=\frac{|y+z|}{2\nu t}$. Since $\alpha^2\nu\le 1$, we can  take $M$ large so that the pole $\lambda=0$ remains on the left of the contour $\Gamma$. It is clear that $|\lambda |\gtrsim 1$ on $\Gamma$. 
 ~\\
On $\Gamma_c$, we note that 
$$
\begin{aligned}
\Re \mu &=  \nu^{-1/2} \Re \sqrt{\frac12 \nu \alpha^2 + \nu a^2 + M e^{i\theta}} \ge \nu^{-1/2} \sqrt{\frac12 \nu \alpha^2 + \nu a^2}  \ge a,
\\
\Re \mu &=  \nu^{-1/2} \Re \sqrt{\frac12 \nu \alpha^2 + \nu a^2 + M e^{i\theta}} \ge c_0\nu^{-1/2} \sqrt{M}
\end{aligned}$$ for some $c_0>0$. 
~\\
This implies that $\Re \mu \ge \frac a2 +\frac a2$ and  $|\mu|\nu \ge c_0 \nu^{1/2} $.  This proves that 
$$ \begin{aligned}
&\Big |\int_{\Gamma_c} e^{\lambda t} e^{-\mu(y+z)} \lw(\frac{1}{\mu \nu}\frac{\alpha-\lambda}{\lambda+\mu-\alpha}\rw)\; d\lambda\Big| \\
&\lesssim \int_{-\pi/2}^{\pi/2} e^{Mt-\frac{1}{2}\alpha^2\nu t} e^{ a^2 \nu t} e^{ - \frac a2 |y+z|} e^{-\frac{a}{2}|y+z| } \nu^{-1/2}  d\theta  \cdot \sup_{\lambda\in \Gamma_c}\lw|\frac{\alpha-\lambda}{\lambda+\mu-\alpha}\rw|
\\
&\lesssim \nu^{-1/2}e^{-\frac a2|y+z| } e^{ a^2 \nu t} e^{ - \frac a2 |y+z|} e^{-\frac{1}{2}\alpha^2\nu t}
\\
&\lesssim \nu^{-1/2} e^{-\frac a2|y+z| } e^{-\frac{1}{2}\alpha^2\nu t}\\
&\lesssim (\nu t)^{-1/2} e^{-\frac {|y+z|^2 }{4\nu t}}e^{-\frac{1}{2}\alpha^2\nu t}\\
\end{aligned}$$ 
in which we used $e^{ a^2 \nu t} e^{ - \frac a2 |y+z|}   = 1$ by definition of $a$, and the fact that $\lw|\frac{\alpha-\lambda}{\lambda+\mu-\alpha}\rw|$ is bounded on $\Gamma_c$. Indeed, we write
\[\bega 
\lw|\frac{\alpha-\lambda}{\lambda+\mu-\alpha}\rw|&= \lw|-1+\frac{\mu}{\lambda+\mu-\alpha}\rw|\le1+ \lw|\dfrac{\mu}{\lambda+\mu-\alpha}\rw|.\\
\enda \]
It suffices to estimate $\lw|\frac{\mu}{\lambda+\mu-\alpha}\rw|$ when $\lambda\in \Gamma_c$. Using the fact that $\lambda=\nu(\mu^2-\alpha^2)$, we can rewrite this term as follows:
\beq\label{quant}
\lw(1+\frac{\alpha}{\mu-\alpha}\rw)\frac{1}{\nu(\mu+\alpha)+1}.
\eeq
First we see that $\frac{\alpha}{\mu-\alpha}$ is bounded, since 
\beq\label{quant1}
|\mu-\alpha|\ge \Re\mu-\alpha \ge c_0\sqrt M\nu^{-1/2}-\alpha\ge c_0\sqrt M\alpha-\alpha =(c_0\sqrt M-1)\alpha\qquad (\text{since}\quad \alpha^2\nu \le 1).
\eeq
Moreover, we get
\beq\label{quant2}
|\nu(\mu+\alpha)+1|\ge 1+\alpha \nu+\nu\Re \mu \ge 1.
\eeq
Hence the quantity \eqref{quant} is uniformly bounded when $\lambda\in \Gamma_c$. This implies that 

\[
\sup_{\lambda\in \Gamma_c}\lw|\frac{\alpha-\lambda}{\lambda+\mu-\alpha}\rw|\lesssim 1
\] as claimed.\\
Now we estimate the term
\beq\label{term1}
\int_{\Gamma_\pm}e^{\lambda t}\frac{1}{\mu \nu}\cdot \frac{\alpha-\lambda}{\lambda+\mu-\alpha}e^{-\mu(y+z)}d\lambda.
\eeq
On $\Gamma_\pm$, we note that  
$$
\begin{aligned}\Re\mu &= \Re \sqrt{\frac12 \alpha^2 + (a+ib)^2 \pm i \nu^{-1} M} \ge \Re \sqrt{(a+ib)^2} = a,\end{aligned}$$
upon noting that the sign of $b$ and $\pm M$ is the same on $\Gamma_\pm$. Similarly, we note that $\Re \mu \gtrsim \sqrt M/\sqrt \nu$. 
By definition of $a$, we have 
 $$ | e^{\lambda t} e^{-\mu|y+z|} |\le  e^{-\frac12\nu \alpha^2 t} e^{-\frac{|y+z|^2}{4\nu t}} e^{-\nu b^2 t},$$ 
Moreover, by a similar argument as in \eqref{quant},\eqref{quant1} and \eqref{quant2}, we get
\[
\sup_{\lambda\in \Gamma_\pm}\lw|\frac{\alpha-\lambda}{\lambda+\mu-\alpha}\rw|
\lesssim 1.
\]
Thus we get the following bound for the term in \eqref{term1} as follows:
\[
\lw|\int_{\Gamma_\pm} e^{\lambda t}\frac{1}{\mu \nu}\frac{\alpha-\lambda}{\lambda+\mu-\alpha}e^{-\mu(y+z)}d\lambda\rw|\lesssim (\nu t)^{-1/2}e^{-\frac{1}{2}\alpha^2\nu t}e^{-\frac{|y+z|^2}{4\nu t}}.
\]
The proof of the bound for $\int_{\Gamma}e^{\lambda t}R_{\lambda,\alpha}^1(y,z)d\lambda$ is complete.
 Similarly, we get the following bound for the second term in the kernel \eqref{Rbound}
 \[
 \lw|\frac{1}{2\pi i}\int_\Gamma e^{\lambda t}\frac{1}{\nu(\lambda+\alpha-\mu)}e^{-\alpha y-\mu z}d\lambda\rw|\lesssim e^{-\alpha y}(\nu t)^{-1/2}e^{-\frac{1}{2}\alpha^2\nu t}e^{-\frac{z^2}{4\nu t}}, \]
 which we skip the details.
 This completes the proof the case $\alpha^2\nu \le 1$.\\

~\\
\textbf{Case 2:} $\alpha^2\nu \ge 1$.\\
Take $a=\frac{z}{2\nu t} $. Consider first the case when $|a-\alpha|\ge \frac12\alpha$. In this case, we move the contour of integration to  
$$\Gamma_1 := \Big\{ \lambda  =- \nu \alpha^2 + \nu (a^2 - b^2 ) + 2\nu ia b, \quad \pm b \in \RR_+\Big\} $$
which may pass the pole at $\lambda =0$ (precisely, it does when $a =\alpha$). By the Cauchy's theory, we have $$R_{\alpha}(t,z,y)=\frac{1}{2\pi i}\int_{\Gamma_1} e^{\lambda t} R_{\lambda,\alpha}(z,y) \; d\lambda  + \mathrm{Res}_0$$
in which the residue at the pole $\lambda =0$ is computed explicitly by \begin{equation}\label{residue} \mathrm{Res}_0 = 0. \end{equation}
Indeed, at the pole $\lambda=0$, we have $\mu=\alpha$. Hence 
\[
(\lambda+\mu-\alpha)R_{\lambda,\alpha}=\frac{\alpha }{\mu \nu}e^{-\mu(y+z)}-\frac{1}{\nu}e^{-\alpha y-\mu z}=0,\qquad \text{since} \qquad \mu=\alpha.
\]
Hence, we have 
\[\bega
R_\alpha(t,z,y)&=\frac{1}{2\pi i}\int_{\Gamma_1}e^{\lambda t}R_{\lambda,\alpha}(y,z)d\lambda\\
\enda
\]
where 
\[\begin{cases}
R_{\lambda,\alpha}^1(z,y)&=\frac{1}{\mu\nu}\cdot \frac{\alpha-\lambda}{\lambda+\mu-\alpha}e^{-\mu(y+z)},\\
R_{\lambda,\alpha}^2(z,y)&=-\frac{1}{\nu(\lambda+\mu-\alpha)}e^{-\alpha y-\mu z}.\\
\end{cases}
\]
Now we estimate 
 \beq\label{est}
 \bega
\lw |\frac{1}{2\pi i}\int_{\Gamma_1}e^{\lambda t}R^1_{\lambda,\alpha}(z,y)d\lambda \rw|&\lesssim \int_{\Gamma_1}e^{\Re\lambda t}\frac{1}{\nu|\mu|}\lw|\frac{\alpha-\lambda}{\lambda+\mu-\alpha}\rw|e^{-\Re\mu(y+z)}|d\lambda|
\\
 &\lesssim \sup_{\lambda\in \Gamma_1}\lw|\frac{\alpha-\lambda}{\lambda+\mu-\alpha}\rw|\int_{\mathbb{R}}e^{-\alpha^2\nu t+\nu a^2 t-\nu b^2 t}\lw(e^{-\frac{a}{2}z}e^{-\frac{a }{2}z}\rw)db \\
 &\lesssim (\nu t)^{-1/2} e^{-\frac{z^2}{4\nu t}}e^{-\alpha^2\nu t}.
 \enda
 \eeq
  Here, we used the fact that $e^{\nu a^2t}e^{-\frac{a}{2}|y+z|}=1, |d\lambda|=\nu|d\mu|$ and 
 \beq\label{claim}
 \sup_{\lambda\in \Gamma_1}\lw|\frac{\alpha-\lambda}{\lambda+\mu-\alpha}\rw|\lesssim 1.
  \eeq
  Indeed, we have 
\beq\label{est1}
\bega 
& \lw|\frac{\alpha-\lambda}{\lambda+\mu-\alpha}\rw|=\lw|-1+\frac{\mu}{\lambda+\mu-\alpha}\rw|\le 1+\lw|\frac{\mu}{\lambda+\mu-\alpha}\rw|=1+\lw|\frac{\mu}{(\mu-\alpha)(\nu(\mu+\alpha)+1)}\rw|\\
&\le 1+\lw|\lw(1+\frac{\alpha}{\mu-\alpha}\rw)\frac{1}{\nu(\mu+\alpha)+1}
\rw|\le 1+\lw(1+\frac{\alpha}{|\mu-\alpha|}\rw)\frac{1}{|\nu\mu+\alpha\nu+1|}\\
&\le 1+\lw(1+\frac{\alpha}{|\mu-\alpha|}\rw)\lesssim 1,
\enda 
\eeq
since $|\mu-\alpha|\ge |\Re\mu-\alpha|=|a-\alpha|\ge \frac{1}{2}\alpha$. 
The bound for $\lw|\int_{\Gamma_1}e^{\lambda t}R_{\lambda,\alpha}^1(z,y)d\lambda\rw|$ is complete.
Similarly, one can obtain the following bound 
\[
\lw|
\frac{1}{2\pi i}\int_{\Gamma_1}e^{\lambda t}R_{\lambda,\alpha}^2(z,y)d\lambda
\rw|\lesssim (\nu t)^{-1/2}e^{-\frac{z^2}{4\nu t}}e^{-\alpha^2\nu t}e^{-\alpha y},
\]
which we skip the details.
Combining the above bounds for $R_{\lambda,\alpha}^1$ and $R_{\lambda,\alpha}^2$, we have 
 \[
 R_\alpha(t,z,y)\lesssim (\nu t)^{-1/2}e^{-\alpha^2\nu t}e^{-\frac{z^2}{4\nu t}}.
 \]
 It remains to consider the case when $|a-\alpha| \le \frac 12\alpha$ and $\alpha^2 \nu \ge 1$. We note in particular that $\frac 12 \alpha \le a\le \frac 32\alpha$. In this case, we take the contour of integration as follows
 $$\Gamma_2 := \Big\{ \lambda  =- \frac18\nu \alpha^2 + \nu (a^2 - b^2 ) + 2\nu ia b, \quad \pm b \in \RR_+\Big\} .$$
Observe that the contour $\Gamma_1$ always leaves the origin on the left, hence the pole at the origin does not appear. Proceeding as in the estimate \eqref{est} and \eqref{est1}, it suffices to check that 
\beq\label{key}
\sup_{\lambda\in \Gamma_2}\lw|\frac{\alpha}{\mu-\alpha}\rw|\lesssim 1
\eeq
in order to conclude 
\beq\label{con1}
\lw|
\frac{1}{2\pi i}\int_{\Gamma_2} e^{\lambda t} R_{\lambda,\alpha}^1(z,y)d\lambda 
\rw|\lesssim(\nu t)^{-1/2}e^{-\frac{z^2}{4\nu t}}e^{-\frac{1}{8}\alpha^2\nu}.
\eeq
To check \eqref{key}, we first see that the contour $\Gamma_2$ cuts the real axis at $\nu\lw(a^2-\frac{1}{8}\alpha^2\rw)$ and cuts the imaginary axis at $\pm 2a\nu\sqrt{a^2-\frac{1}{8}\alpha^2}$. In particular this implies
\[
|\lambda|\ge \nu\lw(a^2-\frac{1}{8}\alpha^2\rw)\ge\nu \lw(\frac{1}{4}\alpha^2-\frac{1}{8}\alpha^2\rw)\ge \frac{1}{8}\alpha^2\nu,\qquad \text{since}\quad a\ge \frac{1}{2}\alpha.
\]
Hence we have 
\beq\label{lambda-bound}
|\lambda|\ge \frac{1}{8}\alpha^2\nu.
\eeq
Now using the fact $\lambda=\nu(\mu^2-\alpha^2)$ and \eqref{lambda-bound}, we see that
\[
\lw|\frac{\alpha}{\mu-\alpha}\rw|=\lw|\frac{\alpha\nu(\mu+\alpha)}{\nu(\mu^2-\alpha^2)}\rw|=\frac{|\alpha^2\nu+\alpha\nu\mu|}{|\lambda|}\le \frac{\alpha^2\nu}{|\lambda|}+\frac{\alpha \nu|\mu|}{|\lambda|}\le 8+\frac{\alpha\nu|\mu|}{|\lambda|}.
\]
Now to bound $\frac{\alpha \nu |\mu|}{|\lambda|}$, we note that $\lambda=\nu(\mu^2-\alpha^2)$ and \eqref{lambda-bound}, and hence
  \[
 \nu |\mu|^2\le |\lambda|+\alpha^2\nu\le 9|\lambda|.
 \]
 Thus \[
 \frac{\alpha\nu|\mu|}{|\lambda|}\lesssim \frac{\alpha \nu |\mu|}{\nu|\mu|^2}=\frac{\alpha}{|\mu|}\lesssim\frac{\alpha}{\Re\mu}\lesssim  \frac{\alpha}{a}\lesssim 1.
 \]
 This completes the proof of the bound stated in \eqref{con1}.
As for the derivatives bound, it is straight forward that 
\[
|\pt_z^k H_\alpha(t,z,y)|\lesssim (\nu t)^{-\frac{k}{2}} \frac{1}{\sqrt{\nu t}}\lw(e^{-\theta_0\frac{|y-z|^2}{4\nu t}}+e^{-\theta_0\frac{|y+z|^2}{4\nu t}}\rw),\qquad k\ge 1
\]
for some $\theta_0>0$. For the residue kernel $R_\alpha(t,z,y)=\frac{1}{2\pi i}\int_{\Gamma}e^{\lambda t}R_{\lambda,\alpha}(z,y)d\lambda$, we note that 
\[
\pt_z\lw(\frac{1}{2\pi i}\int_{\Gamma}e^{\lambda t}R_{\lambda,\alpha}(z,y)d\lambda\rw)=\frac{1}{2\pi i}\int_{\Gamma}e^{\lambda t}\mu R_\alpha(z,y)d\lambda 
\]
Hence, we get 
\[
|\pt_z R_\alpha(t,z,y)|\lesssim (\nu t)^{-1/2}\cdot \frac{1}{\sqrt{\nu t}}e^{-\theta_0\frac{z^2}{4\nu t}}e^{-\theta_0\alpha^2\nu t}
\]
by the exact same argument represented for the bound $R_\alpha(t,z,y)$, and the fact that $\int_{\mathbb R}be^{-\nu t b^2}db\lesssim (\nu t)^{-1/2}$ and $\frac{z}{\nu t}e^{-\frac{z^2}{4\nu t}}\lesssim (\nu t)^{-1/2}e^{-\theta_0\frac{z^2}{4\nu t}}$, which we skip the details (see also \cite{2N}).
\end{proof}

\subsection{The Green function on $\Omega_{\sigma}$}\label{sec-Grcomplex}
The Green function constructed in Proposition \ref{prop-Stokes-Green} can be directly extended to the complex domain $\Omega_{\sigma}$ defined by 
$$
\Omega_{\sigma}= \Big\{z\in\mathbb{C}:\quad|\Im z|<\min\{\sigma|\Re z|,\sigma \}\Big\},
$$
for some small $\sigma>0$. Indeed, the Green function involves precisely the heat kernel $G(t,z) = \frac{1}{\sqrt{4\pi t}} e^{-z^2 / 4t}$, which is extended to the complex domain. In addition, we note that for $z\in \Gamma_{\sigma}$, there holds $\Im z \le \sigma \Re z$, which implies that 
$$ |e^{-z^2 / 4t}| \le e^{- |\Re z|^2/4t + |\Im z|^2 / 4t} \le e^{- (1-\sigma^2) |\Re z|^2 / 4t}.$$
Similar estimates hold for the other terms in the Green function $G_{\alpha}(t,z,y) =  H_\alpha(t,z,y) + R_\alpha (t,z,y)$, yielding \begin{equation}\label{Stokes-Gr-complex}
\begin{aligned}
H_\alpha(t,z,y) &\lesssim \frac{1}{\sqrt{\nu t}} \Big( e^{-(1-\sigma^2)\frac{|\Re y- \Re z|^{2}}{4\nu t}} +  e^{-(1-\sigma^2)\frac{|\Re y+\Re z|^{2}}{4\nu t}} \Big) e^{-\frac{1}{8}\alpha^{2}\nu t}, 
\\R_\alpha (t,z,y) &\lesssim e^{-\theta_0 \alpha^2\nu t} (\nu t)^{-1/2}e^{-\theta_0(1-\sigma^2) \frac{(\Re z)^2}{4\nu t}},
\end{aligned}\end{equation}
for $y,z \in \Gamma_{\sigma}$, and for some $\theta_0>0$. Precisely, for any $z\in \Omega_\sigma$, let $\theta$ be the positive constant so that $z\in \partial \Omega_\theta$. The Duhamel principle \eqref{Duh-Stokes} then becomes 
 \begin{equation}\label{Duh-Stokes-a}
\begin{aligned}
 \omega_\alpha(t,z) &= \int_{\partial\Omega_\theta}G_\alpha(t,z,y) \omega_{0,\alpha} (y) \; dy + \int_0^t  \int_{\partial\Omega_\theta} G_\alpha(t-s,z,y) f_\alpha (s,y)\; dyds,
\end{aligned} \end{equation}
which is well-defined for $z\in \Omega_\sigma$, having the Green function $G_\alpha(t,z,y) $ satisfies the pointwise estimates \eqref{Stokes-Gr-complex}, similar to those on the real line. For this reason, it suffices to derive convolution estimates for real values $y,z$.

\subsection{Convolution estimates}\label{sec-convS}

We now derive convolution estimates. We start with the analytic $L^1$ norms. For $k\ge 0$, we introduce   
$$ \| \omega_\alpha\|_{\mathcal{W}^{k,1}_{\sigma}} = \sum_{j=0}^k \|(\psi(z)\partial_z)^j \omega_\alpha \|_{L^1_\sigma} .$$ 
We prove the following.

\begin{proposition}\label{prop-Stokes-convL1} Let $T>0$ and let $G_{\alpha}(t,z,y)$ be the Green function of the Stokes problem \eqref{Stokes-a}, constructed in Proposition \ref{prop-Stokes-Green}. 
Then, for any $0\le s <  t\le T$ and $k\ge 0$, there is a universal constant $C_T$ so that 
$$
\begin{aligned}
\Big \| \int_0^\infty G_\alpha(t,\cdot,y) \omega_\alpha(y)\; dy \Big\|_{\mathcal{W}^{k,1}_{\sigma}} &\le C_T   \| \omega_\alpha\|_{\mathcal{W}^{k,1}_{\sigma}},
\\
\Big \| \int_0^\infty G_\alpha(t-s,\cdot,y) \omega_\alpha(y,s)\; dy \Big\|_{\mathcal{W}^{k,1}_{\sigma}} &\le C_T  \| \omega_\alpha(s)\|_{\mathcal{W}^{k,1}_{\sigma}},
\end{aligned}$$
uniformly in the inviscid limit.  
\end{proposition}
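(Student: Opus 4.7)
The plan is to decompose $G_\alpha = H_\alpha + R_\alpha$ as in Proposition \ref{prop-Stokes-Green}, with $H_\alpha$ the Neumann heat kernel and $R_\alpha$ the residue carrying the nonlocal boundary correction. By the holomorphic extension of Section \ref{sec-Grcomplex}, the analytic $L^1$-norm reduces to an $L^1$-estimate along each contour $\partial\Omega_\theta$, where the pointwise bounds \eqref{Stokes-Gr-complex} are of the same form as the real-variable ones; for clarity I describe the argument on $\RR_+$. The base case $k=0$ is an application of Young's inequality for kernel operators: it suffices to establish $\sup_y \int_0^\infty |G_\alpha(t,z,y)|\,dz \lesssim 1$ uniformly in $\nu, t, \alpha$. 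For $H_\alpha$ this is the classical Gaussian computation, and for $R_\alpha$ one invokes the sharper bounds that appear inside the proof of Proposition \ref{prop-Stokes-Green} and retain $y$-decay: $|R_\alpha^1| \lesssim (\nu t)^{-1/2}e^{-\theta_0(y+z)^2/(4\nu t)}$ and $|R_\alpha^2| \lesssim (\nu t)^{-1/2} e^{-\alpha y} e^{-\theta_0 z^2/(4\nu t)}$.

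For $k \ge 1$, the key structural identity is
$$\partial_z H_\alpha(t,z,y) = -\partial_y \tilde H_\alpha(t,z,y),$$
where $\tilde H_\alpha(t,z,y) = (4\pi\nu t)^{-1/2}\bigl(e^{-|y-z|^2/(4\nu t)} - e^{-|y+z|^2/(4\nu t)}\bigr)e^{-\alpha^2 \nu t}$ is the Dirichlet heat kernel, which vanishes at $y=0$. Integrating by parts in $y$ therefore produces no boundary term and converts $\partial_z \int H_\alpha\,\omega_\alpha \, dy$ into $\int \tilde H_\alpha\,\partial_y \omega_\alpha \, dy$. To replace the bare $\partial_y$ by the weighted $\psi(y)\partial_y$ dictated by the $\mathcal{W}^{k,1}_\sigma$-norm, I exploit the algebraic identity $\psi(z)-\psi(y) = (z-y)/[(1+z)(1+y)]$ to split $\psi(z) = \psi(y) + (\psi(z)-\psi(y))$; the resulting commutator kernel is controlled by the Gaussian moment estimate $|z-y|\,|\tilde H_\alpha(t,z,y)| \lesssim (\nu t)^{1/2}e^{-\theta'|z-y|^2/(4\nu t)}$, which is uniformly in $L^1_z$. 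Iterating this step $k$ times disposes of the full operator $(\psi(z)\partial_z)^k$. The residual $R_\alpha$ is handled directly: the derivative bounds $|\partial_z^k R_\alpha| \lesssim (\nu t)^{-k/2}|R_\alpha|$ from Proposition \ref{prop-Stokes-Green} are paired with the weight $\psi(z)^k \lesssim z^k$ through $z^k e^{-\theta_0 z^2/(4\nu t)} \lesssim (\nu t)^{k/2}$, yielding uniform $L^1_z$ bounds; the lower-order terms generated by the Leibniz expansion of $(\psi(z)\partial_z)^k$ are estimated identically.

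The main technical obstacle will be the near-boundary behavior in the commutator: the formal substitution $\partial_y = y^{-1}\psi(y)\partial_y$ introduces a $1/y$ singularity that must be absorbed by the Dirichlet vanishing $\tilde H_\alpha(t,z,0)=0$. This is resolved by the Taylor expansion $\tilde H_\alpha(t,z,y) = y\cdot K(t,z,y)$ near $y=0$ with $K$ uniformly bounded, so that $\tilde H_\alpha/y$ remains an integrable kernel; for $y\ge 1$ the singularity is absent and the estimate is trivial. Once these bounds are assembled for the first inequality of the proposition, the second inequality (with $t$ replaced by $t-s$) follows by the same argument applied on the shifted time interval $[s,t]$.
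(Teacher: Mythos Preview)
Your decomposition $G_\alpha = H_\alpha + R_\alpha$ and the overall strategy match the paper. Two remarks.

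\emph{On $R_\alpha$.} The paper's treatment is more direct than yours. Since the bound stated in Proposition~\ref{prop-Stokes-Green} is already independent of $y$, namely $|R_\alpha(t,z,y)| \lesssim (\nu t)^{-1/2}e^{-\theta_0 z^2/(4\nu t)}$, one simply pulls $\int_0^\infty |\omega_\alpha(y)|\,dy = \|\omega_\alpha\|_{L^1}$ out of the convolution and integrates the remaining $z$-Gaussian. For derivatives, $(\psi(z)\partial_z)^k$ applied to $(\nu t)^{-1/2}e^{-\theta_0 z^2/(4\nu t)}$ produces at worst a factor $(z^2/(\nu t))^k$, which is absorbed by the exponential. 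There is no need to retrieve the sharper $y$-decaying bounds on $R_\alpha^1, R_\alpha^2$ from inside the proof of Proposition~\ref{prop-Stokes-Green}; your Young-inequality route works but is an unnecessary detour.

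\emph{On $H_\alpha$.} The paper defers this part entirely to \cite{2N}, Proposition~3.7; your integration-by-parts route via $\partial_z H_\alpha = -\partial_y\tilde H_\alpha$ is the natural argument behind that citation. However, your resolution of the near-boundary obstacle is not correct as written. The kernel $K = \tilde H_\alpha/y$ is \emph{not} uniformly bounded in $\nu, t$: at $y=0$ one has $K(t,z,0) = \partial_y\tilde H_\alpha(t,z,0) \sim z(\nu t)^{-3/2}e^{-z^2/(4\nu t)}$, of size $(\nu t)^{-1}$ for $z\sim\sqrt{\nu t}$, and $\int_0^\infty |K(t,z,0)|\,dz \sim (\nu t)^{-1/2}$, so ``$\tilde H_\alpha/y$ remains an integrable kernel'' uniformly is false. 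The commutator term does close, but only because the full kernel acting on $\psi(y)\partial_y\omega_\alpha$ is $\dfrac{z-y}{y(1+z)}\tilde H_\alpha$: it is the \emph{combination} of the extra moment $(z-y)$ from $\psi(z)-\psi(y)$ with the Dirichlet vanishing that produces a uniformly $L^1_z$-bounded kernel. You should verify this by splitting into $y\le\sqrt{\nu t}$ and $y\ge\sqrt{\nu t}$ and using the Gaussian structure directly, rather than by invoking pointwise boundedness of $K$.
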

\begin{proof} We shall prove the convolution for real values $y,z$. For the complex extension, see Section \ref{sec-Grcomplex}. 
Recall from  Proposition \ref{prop-Stokes-Green} that $
G_\alpha(t,z,y) = H_\alpha(t,z,y) + R_\alpha (t,z,y), 
$
with $$
\begin{cases}
H_\alpha(t,z,y) & = \frac{1}{\sqrt{4\pi\nu t}} \Big( e^{-\frac{|y-z|^{2}}{4\nu t}} +  e^{-\frac{|y+z|^{2}}{4\nu t}} \Big) e^{-\alpha^{2}\nu t}, 
\\ R_\alpha (t,z,y) &\lesssim e^{-\theta_0\alpha^2\nu t}(\nu t)^{-\frac{1}{2}}e^{-\theta_0 \frac{z^2}{\nu t}}.
\end{cases}$$
For $H_\alpha$, we apply (\cite{2N}, Proposition 3.7) to get: 
\[
\Big \| \int_0^\infty H_\alpha(t-s,\cdot,y) \omega_\alpha(y,s)\; dy \Big\|_{\mathcal{W}^{k,1}_{\sigma}} \le C_T  \| \omega_\alpha(s)\|_{\mathcal{W}^{k,1}_{\sigma}}.
\]
Now we will prove that  
\[
\Big \| \int_0^\infty R_\alpha(t-s,\cdot,y) \omega_\alpha(y,s)\; dy \Big\|_{\mathcal{W}^{k,1}_{\sigma}} \le C_T  \| \omega_\alpha(s)\|_{\mathcal{W}^{k,1}_{\sigma}}.
\]
Using the pointwise bound of $R_\alpha(t-s,z,y)$ in Proposition \ref{prop-Stokes-Green}, we have 
\[
\lw|\int_0^\infty R_\alpha(t-s,z,y)\w_\alpha(s,y)dy
\rw|\lesssim e^{-\theta_0 \alpha^2\nu(t-s)}e^{-\theta_0 \frac{z^2}{4\nu (t-s)}}(\nu(t-s))^{-1/2}\int_0^\infty|\w_\alpha(s,y)|dy.
\]
Integrating in $z$, we have 
\[
\lw\|\int_0^\infty R_\alpha(t-s,z,y)\w_\alpha(s,y)dy
\rw\|_{L^1_z}\lesssim \|\w_\alpha(s)\|_{L^1_y}.
\]
As for derivatives, we have 
\beq\label{est}
\bega 
\lw|(\psi(z)\pt_z)^k\lw(\int_0^\infty R_\alpha(t-s,y,z)\w_\alpha(s,y)dy\rw)\rw|&\lesssim \lw(\frac{z^2}{\nu (t-s)}\rw)^k(\nu (t-s))^{-1/2} e^{-\theta_0\frac{z^2}{\nu (t-s)}}\int_0^\infty |\w_\alpha(s,y)|dy\\
&\lesssim (\nu (t-s))^{-1/2} e^{-\theta_0\frac{z^2}{\nu (t-s)}}\|\w_\alpha(s)\|_{L^1_y}.
\enda 
\eeq 
From here, we get 
\[
\lw \|(\psi(z)\pt_z)^k\int_0^\infty R_\alpha(t-s,y,z)\w_\alpha(s,y)dy\rw\|_{L^1_z}\lesssim \|\w_\alpha(s)\|_{L^1_y}.
\]
The proof is complete.
\end{proof}

\subsection{Convolution estimates with boundary layer behaviors}\label{conv-proof}
In this section, we provide the convolution estimates of the Green function against functions in the boundary layer spaces, whose norm is defined by 
\begin{equation}\label{def-blnormFull123}
\| \omega_\alpha\|_{\sigma,\delta(t)}  = \sup_{z\in \Omega_\sigma} | \omega_\alpha(z) | e^{\beta \Re z}
\Bigl( 1 + \delta_t^{-1} \phi_{P} (\delta_t^{-1} z)  +  \delta^{-1} \phi_{P} (\delta^{-1} z)  \Bigr)^{-1} ,
\end{equation}
for $t>0$ and $\beta>0$, in which the boundary thicknesses are $\delta_t = \sqrt{\nu t}$ and $\delta = \sqrt \nu $ and for boundary layer weight $\phi_P(z) = \frac{1}{1+|\Re z|^P}$, $P>1$. We also introduce the boundary norm for derivatives: 
$$ \| \omega_\alpha\|_{\sigma,\delta(t),k} = \sum_{j=0}^k \|(\psi(z)\partial_z)^j \omega_\alpha \|_{\sigma,\delta(t)} $$ 
for $k\ge 0$. In the case $t=0$, the norm $\|\cdot \|_{\sigma,\delta(0)}$ is defined to consist of precisely one boundary layer with thickness $\delta = \sqrt \nu$.

We prove the following.

\begin{proposition}\label{prop-Stokes-conv} Let $T>0$ and let $G_{\alpha}(t,z,y)$ be the Green function of the Stokes problem \eqref{Stokes-a}, constructed in Proposition \ref{prop-Stokes-Green}. 
Then, for any $0\le s <  t\le T$ and $k\ge 0$, there is a universal constant $C_T$ so that 
$$
\begin{aligned}
\Big \| \int_0^\infty G_\alpha(t,\cdot,y) \omega_\alpha(y)\; dy \Big\|_{\sigma, \delta(t),k} &\le C_T   \| \omega_\alpha\|_{\sigma, \delta(0),k},
\\
\Big \| \int_0^\infty G_\alpha(t-s,\cdot,y) \omega_\alpha(s,y)\; dy \Big\|_{\sigma, \delta(t),k} &\le C_T \sqrt{\frac ts}  \| \omega_\alpha(s)\|_{\sigma, \delta(s),k}+C_T\sqrt{\frac{t}{t-s}}\|\w_\alpha(s)\|_{\mathcal{W}^{k,1}_\sigma}
\end{aligned}$$
uniformly in the inviscid limit.  
\end{proposition}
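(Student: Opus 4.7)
The plan is to use the decomposition $G_\alpha = H_\alpha + R_\alpha$ from Proposition \ref{prop-Stokes-Green} and estimate the two pieces separately. The Neumann heat kernel contribution $H_\alpha$ is handled by direct appeal to the analogous convolution estimates in \cite{2N}, which were carried out in the identical pencil-domain boundary-layer setting; this yields the first inequality in full and the $C_T\sqrt{t/s}\,\|\omega_\alpha(s)\|_{\sigma,\delta(s),k}$ piece of the second inequality, via the standard fact that the Neumann heat semigroup propagates boundary-layer behavior from thickness $\delta_s$ at time $s$ to thicknesses $\delta_t$ and $\delta$ at time $t$.

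The new content is the residual kernel $R_\alpha$, for which I would use the pointwise bound from Proposition \ref{prop-Stokes-Green},
$$|\partial_z^k R_\alpha(t-s,z,y)| \lesssim e^{-\theta_0\alpha^2\nu(t-s)} (\nu(t-s))^{-k/2} (\nu(t-s))^{-1/2}e^{-\theta_0 z^2/(4\nu(t-s))},$$
which is uniform in $y\ge 0$. Writing $\delta_{t-s}=\sqrt{\nu(t-s)}$, the $z$-profile is itself a boundary layer of thickness $\delta_{t-s}$. After interchanging $(\psi(z)\partial_z)^j$ with the $y$-integration for $j\le k$, the derivatives act only on the Gaussian, and the elementary absorption $(z^2/\delta_{t-s}^2)^j e^{-\theta_0 z^2/(4\delta_{t-s}^2)}\lesssim e^{-\theta_0' z^2/(4\delta_{t-s}^2)}$ (using $\psi(z)\le z$) reduces every $j$-derivative bound to the $j=0$ case, so $R_\alpha$ contributes only an $L^1$-in-$y$ bound for all $j\le k$, namely $\|\omega_\alpha(s)\|_{L^1_y}\le \|\omega_\alpha(s)\|_{\mathcal{W}^{k,1}_\sigma}$.

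To match the Gaussian $e^{-\theta_0 z^2/(4\delta_{t-s}^2)}$ against the output boundary-layer weight, I observe that $\delta_{t-s}\le \delta_t$, so the pointwise inequality $e^{-\theta_0 u^2/4}\lesssim \phi_P(u)$ applied at $u=(\delta_t/\delta_{t-s})(z/\delta_t)$ converts $\delta_{t-s}^{-1}e^{-\theta_0 z^2/(4\delta_{t-s}^2)}$ into $\sqrt{t/(t-s)}\,\delta_t^{-1}\phi_P(\delta_t^{-1}z)$, exactly the boundary-layer weight at time $t$ with the claimed prefactor. Half of the Gaussian is reserved to absorb the $e^{\beta \Re z}$ weight at uniform-in-$T$ cost. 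For the first estimate, $s=0$ makes $\sqrt{t/(t-s)}=1$ and $\|\omega_{0,\alpha}\|_{L^1_y}$ is bounded by $\|\omega_{0,\alpha}\|_{\sigma,\delta(0)}$ via the integrability $\int_0^\infty(1+\delta^{-1}\phi_P(\delta^{-1}y))e^{-\beta y}\,dy<\infty$, which crucially uses $P>1$.

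The main obstacle I expect is the simultaneous absorption of three competing $z$-behaviors (the exponential growth $e^{\beta \Re z}$, the polynomial $\phi_P^{-1}(\delta_t^{-1}z)=1+(\delta_t^{-1}z)^P$, and the derivative-induced factors $(z/\delta_{t-s})^j$) into the single Gaussian $e^{-\theta_0 z^2/(4\delta_{t-s}^2)}$; splitting this Gaussian into independent pieces and rescaling by $z/\delta_t$ in the polynomial term versus $z/\delta_{t-s}$ in the derivative term handles this uniformly in $\nu$. A secondary technical point is the extension to complex $z\in\Omega_\sigma$: by \eqref{Stokes-Gr-complex} the Green function on $\partial\Omega_\theta$ satisfies the same Gaussian estimate in $\Re z$ with $\theta_0$ replaced by $\theta_0(1-\sigma^2)$, and the Duhamel representation \eqref{Duh-Stokes-a} reduces the complex-contour convolution to its real-line counterpart verbatim.
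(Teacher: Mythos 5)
Your proposal is correct and follows essentially the same route as the paper: decompose $G_\alpha=H_\alpha+R_\alpha$, cite the convolution estimate from \cite{2N} for the Neumann heat kernel $H_\alpha$, and treat $R_\alpha$ via the pointwise Gaussian bound from Proposition \ref{prop-Stokes-Green}, reducing its contribution to $\|\omega_\alpha(s)\|_{L^1_y}$ after absorbing the $(\psi(z)\partial_z)^j$ factors into the Gaussian and splitting the remaining Gaussian to generate both the boundary-layer weight $\delta_t^{-1}\phi_P(\delta_t^{-1}z)$ (with the $\sqrt{t/(t-s)}$ prefactor from $\delta_{t-s}^{-1}=\sqrt{t/(t-s)}\,\delta_t^{-1}$) and the exponential weight $e^{-\beta_0 z}$; the paper's own computation for $R_\alpha$ splits the Gaussian exponent in half and uses the Cauchy inequality $\frac{z^2}{8\nu(t-s)}+32\nu(t-s)\ge 2z$ in exactly the manner you describe, and it likewise defers the complex extension to the real-line estimate via \eqref{Duh-Stokes-a}.
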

\begin{proof}
Since $G_\alpha(t-s,z,y)=H_\alpha(t-s,z,y)+R_\alpha(t-s,z,y)$, the convolution estimates are needed for the heat kernel $H_\alpha$ and $R_\alpha$. For $H_\alpha$, we apply (\cite{2N}, Lemma 3.10) to get   
\[
\Big \| \int_0^\infty H_\alpha(t-s,\cdot,y) \omega_\alpha(s,y)\; dy \Big\|_{\sigma, \delta(t),k} \le C_T \sqrt{\frac ts}  \| \omega_\alpha(s)\|_{\sigma, \delta(s),k}.
\]
Now we will prove that 
\[
\Big \| \int_0^\infty R_\alpha(t-s,\cdot,y) \omega_\alpha(s,y)\; dy \Big\|_{\sigma, \delta(t),k} \le C_T \sqrt{\frac {t}{t-s}}  \| \omega_\alpha(s)\|_{\sigma, \delta(s),k}.
\]
By the estimate \eqref{est}, it suffices to check that 
\[
(\nu(t-s))^{-1/2}e^{-\theta_0\alpha^2\nu(t-s)}e^{-\theta_0\frac{z^2}{4\nu(t-s)}}\lesssim \sqrt{\frac{t}{t-s}}e^{-\beta_0 z}\lw(\delta_t^{-1}\phi_P(\delta_t^{-1}z)\rw).
\]
To this end, we have 
\[\bega
(\nu(t-s))^{-1/2}e^{-\theta_0\alpha^2\nu(t-s)}e^{-\theta_0\frac{z^2}{4\nu(t-s)}}
&=\sqrt{\frac{t}{t-s}}\delta_t^{-1}e^{-\theta_0 \frac{z^2}{8\nu(t-s)}}e^{-\theta_0\frac{z^2}{8\nu(t-s)}}e^{-\theta_0\alpha^2\nu(t-s)}\\
&\lesssim \sqrt{\frac{t}{t-s}}\lw(\delta_t^{-1} e^{-\theta_0\frac{z^2}{8\nu t}}\rw) e^{-\theta_0\frac{z^2}{8\nu(t-s)}}e^{-32\cdot \theta_0 \nu(t-s)}e^{32\cdot\theta_0\nu(t-s)}\\
&\lesssim \sqrt{\frac{t}{t-s}}\lw(\delta_t^{-1}\phi_P(\delta_t^{-1}z)\rw)e^{-\beta_0 z}
\enda 
\]
as long as $ \beta_0\le 2 \theta_0$, by a simple Cauchy inequality $\frac{z^2}{8\nu(t-s)}+32\nu(t-s)\ge 2z$.
The proof is complete.
\end{proof}

\section{Proof of the main theorems}\label{sec-proof}
As mentioned in the introduction, we construct the solutions to the Navier-Stokes equation via the vorticity formulation: 
\beq\label{NS-vor1}
\begin{aligned}
\partial_{t}\omega-\nu\Delta\omega &=-u\cdot\nabla\omega
\\
(\nu \w-u_1)\vert_{z=0}&=0, 
\end{aligned}\eeq
in which $u = \nabla^\perp \Delta^{-1} \omega$, with $\Delta^{-1}$ being the inverse of Laplacian with the Dirichlet boundary condition. For convenience, we set $N = u \cdot \nabla \omega$.  
The solution to the Navier-Stokes is then constructed via the Duhamel's principle:
 \beq\label{Duh-w1}
\omega(t)=e^{\nu tB}\omega_{0} -\int_{0}^{t}e^{\nu(t-s)B}  N(s) \; ds\eeq 
with $\omega_0\in {\cal B}^{\rho_0,\sigma_0,\delta}$, for some $\rho_0,\sigma_0>0$. 

\subsection{Nonlinear iteration}\label{sec-nonlinear}

Let us fix positive numbers $\gamma, \zeta,$ and $\rho_0$, and introduce the following nonlinear iterative norm for vorticity: 
\beq\label{def-normw}
\bega 
A(\gamma)=&\quad \sup_{0<\gamma t< \rho_0}\sup_{\rho<\rho_0- \gamma t}\Bigl{\{} 
 ||| \omega(t)|||_{\mathcal{W}^{1,1}_{\rho,\rho}} +  ||| \omega(t)|||_{\mathcal{W}^{2,1}_{\rho,\rho}}(\rho_0-\rho-\gamma t)^{\zeta}\Bigr{\}}\\
\enda 
\eeq 
with recalling $$ ||| \omega(t)|||_{\mathcal{W}^{k,1}_{\rho,\rho}}  = \sum_{j+\ell \le k}  \|\partial_x^j (\psi(z)\partial_z)^\ell \omega(t)\|_{L^1_{\rho,\rho}}.$$
Here, for sake of presentation, we take the same analyticity radius in $x$ and $z$; namely, $\sigma = \rho <\rho_0$. Thanks to Lemma \ref{lem-emL1}, $\omega_0 \in \mathcal{W}^{k,1}_{\rho,\rho}$, for any $k\ge 0$. 

We shall show that the vorticity norm remains finite for sufficiently large $\gamma$. The weight $(\rho_0-\rho-\gamma t)^\zeta$, with a small $\zeta>0$, is standard to avoid time singularity when recovering the loss of derivatives (\cite{caflisch1990,Safonov}). Let $\rho < \rho_0 - \gamma t$. Thanks to Lemma \ref{lem-bilinear}, we have \begin{equation}\label{non-est}
\begin{aligned}
 ||| N(t)|||_{\mathcal{W}^{0,1}_{\rho,\rho}} &\lesssim  ||| \omega(t)|||_{\mathcal{W}^{1,1}_{\rho,\rho}}^2 \le A(\gamma)^2,
 \\
 ||| N(t)|||_{\mathcal{W}^{1,1}_{\rho,\rho}} &\lesssim  ||| \omega(t)|||_{\mathcal{W}^{1,1}_{\rho,\rho}}  ||| \omega(t)|||_{\mathcal{W}^{2,1}_{\rho,\rho}}  \le A(\gamma)^2 (\rho_0-\rho-\gamma t)^{-\zeta}
. \end{aligned}\end{equation}

Now, using the Duhamel integral formula \eqref{Duh-w1}, we estimate 
$$
\begin{aligned}
||| \omega(t) |||_{\mathcal{W}^{k,1}_{\rho,\rho}} \le ||| e^{\nu tB}\omega_{0}|||_{\mathcal{W}^{k,1}_{\rho,\rho}} +\int_{0}^{t} ||| e^{\nu(t-s)B} N (s)|||_{\mathcal{W}^{k,1}_{\rho,\rho}} \; ds .
\end{aligned}
$$
In view of Proposition \ref{prop-Stokes}, the term from the initial data is already estimated, giving $||| e^{\nu tB}\omega_{0}|||_{\mathcal{W}^{k,1}_{\rho,\rho}} \le \|\omega_0\|_{\mathcal{W}^{k,1}_{\rho,\rho}}$. As for the integral terms, we estimate 
$$\begin{aligned}
\int_{0}^{t} ||| e^{\nu(t-s)B} N(s)|||_{\mathcal{W}^{1,1}_{\rho,\rho}} \; ds 
&\le C_0 \int_0^t ||| N(s)|||_{\mathcal{W}^{1,1}_{\rho,\rho}} \; ds
\\&\le C_0 A(\gamma)^2 \int_0^t  (\rho_0-\rho-\gamma s)^{-\zeta}\; ds
\\&\le 
C_0 \gamma^{-1} A(\gamma)^2 .
\end{aligned}$$
 Next, we give estimates for $k=2$. Noting that $\rho < \rho_0 - \gamma t \le \rho_0 - \gamma s$, we take $\rho' = \frac{\rho + \rho_0 - \gamma s}{2}$ and compute 
$$\begin{aligned}
\int_{0}^{t} ||| e^{\nu(t-s)B} N(s)|||_{\mathcal{W}^{2,1}_{\rho,\rho}} \; ds 
&\le C_0 \int_0^t ||| N(s)|||_{\mathcal{W}^{2,1}_{\rho,\rho}} \; ds 
\\&\le C_0 \int_0^t\frac{1}{\rho' - \rho}||| N(s)|||_{\mathcal{W}^{1,1}_{\rho',\rho'}}\; ds
\\&\le C_0 A(\gamma)^2 \int_0^t  (\rho_0-\rho-\gamma s)^{-1-\zeta}\; ds
\\&\le C_0 \gamma^{-1} A(\gamma)^2 (\rho_0-\rho-\gamma t)^{-\zeta}.
\end{aligned}$$
Same computation holds for the trace operator $\Gamma(\nu t)$, yielding 
$$ A(\gamma ) \le  C_0 \|\omega_0\|_{\mathcal{W}^{2,1}_{\rho,\rho}}  + C_0 \gamma^{-1} A(\gamma)^2 . $$ 
By taking $\gamma$ sufficiently large, the above yields the uniform bound on the iterative norm in term of initial data. This yields the local solution in $L^1_{\rho,\rho}$ for $t \in [0,T]$, with $T = \gamma^{-1}\rho_0$. 

\subsection{Propagation of boundary layers}\label{bdrPropa}
It remains to prove that the constructed solution has the boundary layer behavior as expected, having already constructed solutions in $L^1_{\rho,\rho}$ spaces. Indeed, we now introduce the following nonlinear iterative norm for vorticity: 
\beq\label{def-normw}
\bega 
B(\gamma)=&\quad \sup_{0<\gamma t< \rho_0}\sup_{\rho<\rho_0- \gamma t}\Bigl{\{} 
 ||| \omega(t)|||_{\rho,\delta(t),1} +  ||| \omega(t)|||_{\rho,\delta(t),2}(\rho_0-\rho-\gamma t)^{\zeta}\Bigr{\}}\\
\enda 
\eeq 
with the boundary layer norm $$ ||| \omega(t)|||_{\rho,\delta(t),k}  = \sum_{j+\ell \le k} \|\partial_x^j (\psi(z)\partial_z)^\ell \omega(t)\|_{\rho,\rho,\delta(t)} .$$ Thanks to Lemma \ref{lem-bilinear}, we estimate 
\begin{equation}\label{non-est}
\begin{aligned}
 ||| N(t)|||_{\rho,\delta(t),0} &\lesssim  ||| \omega(t)|||_{\rho,\delta(t),1}^2 \le B(\gamma)^2
 \\
 ||| N(t)|||_{\rho,\delta(t),1} &\lesssim  ||| \omega(t)|||_{\rho,\delta(t),1}  ||| \omega(t)|||_{\rho,\delta(t),2}  \le B(\gamma)^2 (\rho_0-\rho-\gamma t)^{-\zeta}
. \end{aligned}\end{equation}
Now, using the Duhamel integral formula \eqref{Duh-w1}, we estimate 
$$
\begin{aligned}
||| \omega(t) |||_{\rho,\delta(t),k} \le ||| e^{\nu tB}\omega_{0}|||_{\rho,\delta(t),k} +\int_{0}^{t} ||| e^{\nu(t-s)B} N (s)|||_{\rho,\delta(t),k} \; ds 
\end{aligned}
$$
In view of Proposition \ref{prop-Stokes}, the term from the initial data is already estimated, giving $||| e^{\nu tB}\omega_{0}|||_{\rho,\delta(t),k} \le \|\omega_0\|_{\rho,\delta(0),k}$. We estimate 
$$\begin{aligned}
&\int_{0}^{t} ||| e^{\nu(t-s)B} N(s)|||_{\rho,\delta(t),1} \; ds 
\\&\lesssim \int_0^t \lw(\sqrt{\frac{t}{s}}||| N(s)|||_{\rho,\delta(s),1}+\sqrt{\frac{t}{t-s}}|||N(s)|||_{\mathcal{W}^{1,1}_{\rho,\rho}}\rw) \; ds\\
&\lesssim B(\gamma)^2 \int_0^t\sqrt{\frac{t}{s}}  (\rho_0-\rho-\gamma s)^{-\zeta}ds+\sup_{0\le s\le T}|||N(s)|||_{\mathcal{W}^{1,1}_{\rho,\rho}}\int_0^t\sqrt{\frac{t}{t-s}}\; ds
\\
&\lesssim B(\gamma)^2 \Big( \int_0^{t/2}+ \int_{t/2}^t \Big) \sqrt{\frac{t}{s}}  (\rho_0-\rho-\gamma s)^{-\zeta}\; ds+t\cdot \sup_{0\le s\le T}|||N(s)|||_{\mathcal{W}^{1,1}_{\rho,\rho}}
\\
&\le C_0 B(\gamma)^2 \Big( t (\rho_0 - \rho - \frac12 \gamma t)^{-\zeta} + \frac1\gamma (\rho_0-\rho-\frac12\gamma t)^{1-\zeta}\Big)+t\cdot \sup_{0\le s\le T}|||N(s)|||_{\mathcal{W}^{1,1}_{\rho,\rho}}\\
&\le C_0 \gamma^{-1} B(\gamma)^2 (\rho_0 - \rho)^{-\zeta}+t\cdot \sup_{0\le s\le T}|||N(s)|||_{\mathcal{W}^{1,1}_{\rho,\rho}},
\end{aligned}$$
in which we used $\gamma t\le \rho_0$ and $\gamma t < \rho_0 - \rho$. Next, noting that  $\rho < \rho_0 - \gamma t \le \rho_0 - \gamma s$, we take $\rho' = \frac{\rho + \rho_0 - \gamma s}{2}$ and compute 
$$\begin{aligned}
&\int_{0}^{t} ||| e^{\nu(t-s)B} N(s)|||_{\rho,\delta(t),2} \; ds 
\\&\lesssim \int_0^t \lw(\sqrt{\frac{t}{s}}||| N(s)|||_{\rho,\delta(s),2}+\sqrt{\frac{t}{t-s}}|||N(s)|||_{\mathcal{W}^{2,1}_{\rho,\rho}}\rw) \; ds \\
&\lesssim  \int_0^t \sqrt{\frac{t}{s}} \frac{1}{\rho' - \rho}||| N(s)|||_{\rho',\delta(s),1}\; ds+t\cdot\sup_{0\le s\le T}|||N(s)|||_{\mathcal{W}^{2,1}_{\rho,\rho}}
\\&\lesssim  B(\gamma)^2 \int_0^t \sqrt{\frac{t}{s}}  (\rho_0-\rho-\gamma s)^{-1-\zeta}\; ds+t\cdot\sup_{0\le s\le T}|||N(s)|||_{\mathcal{W}^{2,1}_{\rho,\rho}}
\\
&\le C_0 B(\gamma)^2 \Big( \int_0^{t/2}+ \int_{t/2}^t \Big) \sqrt{\frac{t}{s}}  (\rho_0-\rho-\gamma s)^{-1-\zeta}\; ds +t\cdot\sup_{0\le s\le T}|||N(s)|||_{\mathcal{W}^{2,1}_{\rho,\rho}}\\
&\le C_0 B(\gamma)^2 \Big( t (\rho_0 - \rho - \frac12 \gamma t)^{-1-\zeta} + \frac1\gamma (\rho_0-\rho-\gamma t)^{-\zeta}\Big)+t\cdot\sup_{0\le s\le T}|||N(s)|||_{\mathcal{W}^{2,1}_{\rho,\rho}}\\
&\le C_0 \gamma^{-1} B(\gamma)^2 (\rho_0-\rho-\gamma t)^{-\zeta}+t\cdot\sup_{0\le s\le T}|||N(s)|||_{\mathcal{W}^{2,1}_{\rho,\rho}}.\end{aligned}$$
This proves the boundedness of the iterative norm $B(\gamma)$, and hence the propagation of the boundary layer behaviors. Theorem \ref{theo-main} follows. 

\subsection{Proof  of the inviscid limit}\label{proof}
In this section, we conclude the paper by proving the inviscid limit of Navier-Stokes for the critical slip boundary condition \eqref{cri}.
~\\
~\\
\textit{Proof of theorem \ref{theo-limit}}. Let $u^E\in W^{2,\infty}(\Omega)\cap W^{2,2}(\Omega)$ be the solution to Euler (in our case, $u^E$ is even analytic). As in \eqref{Sva2}, we have
\beq\label{ineq1}
\bega 
&\frac{1}{2}\frac{d}{dt}\|v\|_{L^2}^2+\int_\Omega (v\cdot \nabla u^E)\cdot v+\nu \int_\Omega \nabla u^E\cdot \nabla v+\nu\int_{\Omega}|\nabla v|^2\\
&+ \int_{\mathbb{T}}|u_1^\nu(t,x,0)|^2dx-\nu\int_{\mathbb{T}}\w^\nu(t,x,0)u_1^E(t,x,0)=0.\\
\enda 
\eeq
By Cauchy inequality, we have 
\[
\dfrac{d}{dt}\|v\|_{L^2}^2\lesssim C_E\lw(\|v\|_{L^2}^2+\nu+\nu\int_{\mathbb{T}}|\w^\nu(t,x,0)|dx\rw),
\]
where $C_E$ is a constant only depending on $u^E$. Now, since $\|\w^\nu(t)\|_{\sigma,\rho,\delta(t)}$ is uniformly bounded in $\nu$, there exists $C_0>0$ such that 
\[
|\w^\nu(t,x,y)|\le C_0 e^{-\beta_0 y}\lw(1+\delta^{-1}\phi_P(\delta^{-1}y)+\delta_t^{-1}\phi_P(\delta_t^{-1}y)\rw).
\]
Putting $y=0$, we get 
\beq \label{ineq3}
|\w^\nu(t,x,0)|\lesssim \delta_t^{-1}.
\eeq 
Combining \eqref{ineq1} and \eqref{ineq3}, we get 
\[
\dfrac{d}{dt}\|v(t)\|_{L^2}^2\lesssim \|v(t)\|_{L^2}^2+\dfrac{\sqrt{\nu}}{\sqrt{t}}+\nu.
\]
Hence, by Gronwall inequality, we get
\[
\|v(t)\|_{L^2}\lesssim (\nu t)^{1/4}+\sqrt\nu+\|v(0)\|_{L^2}.
\]
The proof is complete.

\bibliographystyle{abbrv}

\def\cprime{$'$} \def\cprime{$'$}

%

\end{document}